\newcommand{\salt}{\vspace*{2.5mm}}     
\newcommand{\ap}{@}
\newcommand{@}[1]{\ifmmode{}^\prime\noexpand#1%
 \else\ifx#1a\u{a}%
 \else\ifx#1i\^{\i}%
 \else\ifx#1s\c{s}%
 \else\ifx#1t\c{t}%
 \else\ifx#1A\u{A}%
 \else\ifx#1I\^{I}%
 \else\ifx#1S\c{S}%
 \else\ifx#1T\c{T}%
 \else\ifx#1"\ap%
    \fi\fi\fi\fi\fi\fi\fi\fi\fi%
\fi}
\font\sc=cmcsc10
\newcommand{\eqnoone}  
   {}
\newcommand{\eqnotwo}  
   {}
\newcounter{alf}
\newcommand{\adresa}[1]{\par\vspace*{-11pt}
                        \begin{flushright}
                        {\small 
                        #1}
                        \end{flushright}
                        }
\newtheorem{theorem}{Theorem}
\theoremstyle{plain}
\newtheorem{corollary}{Corollary}
\newtheorem{lemma}{Lemma}
\newtheorem{proposition}{Proposition}
\theoremstyle{definition}
\newtheorem{exm}[theorem]{Example}
\newtheorem{rem}[theorem]{Remark}
\numberwithin{equation}{section}
\newcommand{\N}{\mathbb{N}}
\newcommand{\Z}{\mathbb{Z}}
\newcommand{\Q}{\mathbb{Q}}
\newcommand{\C}{\mathbb{C}}
\newcommand{\PP}{\mathbb{P}}
\newcommand{\al}{\alpha}
\newcommand{\RR}{{\mathcal R}}
\newcommand{\VV}{\mathcal{V}}
\newcommand{\OO}{\mathcal{O}}
\newcommand{\m}{{\mathfrak{m}}}
\newcommand{\n}{{\mathfrak{n}}}
\DeclareMathOperator{\im}{im}
\DeclareMathOperator{\ab}{{ab}}
\DeclareMathOperator{\Hom}{{Hom}}
\DeclareMathOperator{\ann}{{ann}}
\DeclareMathOperator{\supp}{supp}
\DeclareMathOperator{\specm}{Spec}
\DeclareMathOperator{\reg}{reg}
\newcommand{\surj}{\twoheadrightarrow}
\newcommand{\inj}{\hookrightarrow}
\newcommand{\imp}{\:\Longrightarrow\:}
\newcommand{\same}{\:\Longleftrightarrow\:}
\def\dot{\mathchar"013A}  
\newcommand{\hdot}{{\raise1pt\hbox to0.35em{\Large $\dot$\!}}} 
\newcommand{\cdga}{\ensuremath{\mathsf{cdga}}}
\begin{document}

\title{Algebraic models, Alexander-type invariants, and Green--Lazarsfeld sets}

\author{Alexandru Dimca, Stefan Papadima, Alexandru Suciu}

\date{July 27, 2014}

\pagestyle{myheadings}

\markboth{A. Dimca, S. Papadima, A. Suciu}%
{Algebraic models, Alexander-type invariants, and Green--Lazarsfeld sets}

\maketitle 

\begin{abstract}
We relate the geometry of the resonance varieties
associated to a commutative differential graded algebra 
model of a space to the finiteness properties of the 
completions of its Alexander-type invariants. We also 
describe in simple algebraic terms the non-translated 
components of the degree-one characteristic 
varieties for a class of non-proper complex manifolds. 
\end{abstract}

\begin{quotation}
\noindent{\bf Key Words}: {Resonance varieties, characteristic 
varieties, Alexander invariants, completion, Gysin models, intersection form.\\
}

\noindent{\bf 2010 Mathematics Subject Classification}:  
Primary 14M12, 55N25.\\
Se\-con\-dary 14F35, 20J05.  
\end{quotation}

\thispagestyle{empty} 

\section{Introduction}
\label{sect:intro}

Throughout, $X$ will denote a connected CW-complex.  
Without loss of generality, we may assume 
that $X$ has a single $0$-cell.  Let  $\pi=\pi_1(X,x_0)$ 
be the fundamental group of our space based at this point, 
and let $\pi_{\ab}=H_1(X,\Z)$ be the abelianization of $\pi$. 

An old idea, going back to J.W.~Alexander's definition of 
his eponymous knot polynomial, is to consider the  homology 
groups of the universal abelian cover $X^{\ab}$, 
viewed as modules over the group-algebra $\C[\pi_{\ab}]$. 
The resulting Alexander invariants, $H_i(X^{\ab}, \C)$, 
play an important role in low-dimensional topology and 
geometry.  

Let $I$ be the augmentation ideal of $\C[\pi_{\ab}]$, and let 
$\widehat{H}_i(X^{\ab}, \C)$ be the completion of the $i$-th 
Alexander invariant with respect to the $I$-adic filtration. 
The first main result of this note is about the finiteness properties of 
these modules (as well as their natural generalizations), 
when viewed as complex vector spaces. 

Our approach is via commutative differential graded algebras
(for short, $\cdga$'s) $A^{\hdot}$, and their resonance varieties 
$\RR^i_r (A)$, which sit inside $H^1(A)$. We say that $A$ is a $q$-model 
for $X$ if $A$ has the same Sullivan $q$-minimal model as the de 
Rham $\cdga$ of $X$ \cite{Su}.  When the space $X$ has a 
$q$-model $A$ with good finiteness properties (i.e., when
$A$ is $q$-finite in the sense explained in Section \ref{sect:res}), 
we relate the finite-dimensionality of the completions of the Alexander 
invariants of $X$ to the geometry of the resonance varieties of $A$, as follows. 

\begin{theorem}
\label{thm:intro} 
Let $X$ be a connected CW-complex with finite $q$-skeleton. 
Suppose $X$ admits a $q$-finite $q$-model $A$. Then 
the complex vector space $\bigoplus_{i\le q} \widehat{H}_{i} (X^{\ab}, \C)$ 
is finite-dimensional if and only if $0$ is an isolated point in  
the variety $\bigcup_{i\le q} \RR^i_1 (A)$.
\end{theorem}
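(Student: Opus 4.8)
The plan is to factor the statement through the algebraic Alexander-type invariant of the model $A$ together with a purely commutative-algebra finiteness criterion. Write $S=\Sym(H_1(X,\C))$ for the symmetric algebra, graded so that $\specm S$ is identified with $H^1(A)=H^1(X,\C)$, the ambient space of the resonance varieties. Recall that the $I$-adic completion $\widehat{\C[\pi_\ab]}$ is isomorphic to the completion $\widehat{S}$ of $S$ at its augmentation ideal, since $\gr_I \C[\pi_\ab]\cong S$. The first and main step is to use the $q$-model hypothesis to compute the completed Alexander invariants from $A$: for $i\le q$ there are finitely generated graded $S$-modules $B_i(A)$ — the homology modules of the universal Aomoto complex $(H^\hdot(A)\otimes S,\partial)$ attached to $A$ — together with an isomorphism $\bigoplus_{i\le q}\widehat{H}_i(X^\ab,\C)\cong \bigoplus_{i\le q}\widehat{B_i(A)}$, where $\widehat{B_i(A)}=B_i(A)\otimes_S\widehat S$. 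Finite generation of the $B_i(A)$ over the Noetherian ring $S$ is exactly where $q$-finiteness of $A$, together with finiteness of the $q$-skeleton of $X$, is used.

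With this identification in hand the argument becomes algebraic. By construction of the universal Aomoto complex, specializing $\partial$ at a point $a\in H^1(A)$ recovers the Aomoto differential (multiplication by $a$) on $H^\hdot(A)$, so the support of $B_i(A)$ as an $S$-module is precisely the degree-$i$, rank-one resonance variety; hence $\bigcup_{i\le q}\supp_S B_i(A)=\bigcup_{i\le q}\RR^i_1(A)$. Next, a finitely generated graded $S$-module $N$ is finite-dimensional over $\C$ if and only if $\supp_S N\subseteq\{0\}$, and completing such an $N$ at the augmentation ideal preserves its dimension over $\C$ in both directions, so $\dim_\C\widehat N<\infty$ if and only if $\dim_\C N<\infty$. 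Applying this to $N=\bigoplus_{i\le q}B_i(A)$ and invoking the isomorphism above, $\bigoplus_{i\le q}\widehat H_i(X^\ab,\C)$ is finite-dimensional if and only if $\bigcup_{i\le q}\RR^i_1(A)\subseteq\{0\}$. Finally, each $\RR^i_1(A)$ is invariant under the scaling action on $H^1(A)$, so the union is a cone through the origin; for a cone, being contained in $\{0\}$ is equivalent to having $0$ as an isolated point. This yields the stated equivalence.

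The hard part is the first step, namely the model-theoretic computation $\bigoplus_{i\le q}\widehat{H}_i(X^\ab,\C)\cong\bigoplus_{i\le q}\widehat{B_i(A)}$. This requires transporting the topological Alexander invariants, defined from the equivariant chain complex of $X^\ab$ over $\C[\pi_\ab]$, to the algebraic side through the $q$-minimal model, matching the $I$-adic filtration on the topological side with the grading filtration on the $B_i(A)$, and checking that the comparison is an isomorphism in degrees $\le q$. I expect this to rest on a filtered-complex comparison whose $E_1$-page is the universal Aomoto complex, with $q$-finiteness guaranteeing that the relevant modules are finitely generated so that completion commutes with passage to homology. Once this input, established in the earlier sections, is granted, the remaining steps are routine commutative algebra.
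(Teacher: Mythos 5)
There are two genuine gaps in your argument, one of which is fatal on its face. The final step claims that each $\RR^i_1(A)$ is invariant under the scaling action on $H^1(A)$, so that the union is a cone and ``$0$ is isolated'' is equivalent to ``$\subseteq\{0\}$''. This is false for a $\cdga$ with nonzero differential: since $d_\omega=d+\omega\cdot$ is only affine in $\omega$, the resonance varieties need not be homogeneous, and the paper's Example \ref{ex:solv} exhibits a model with $\RR^1(A)=\{0,1\}$. This is precisely why the theorem is stated with ``$0$ is an isolated point'' rather than ``$=\{0\}$''; the stronger conclusion your chain of equivalences would deliver only holds under extra hypotheses (positive weights, as in Corollary \ref{cor:fd}, or formality). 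So if your intermediate claims were all correct you would be proving something false, which signals a break earlier in the chain.

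That break is the ``hard part'' you defer: the isomorphism $\bigoplus_{i\le q}\widehat{H}_i(X^{\ab},\C)\cong\bigoplus_{i\le q}\widehat{B_i(A)}$ is not established in the paper or in the literature it cites, and is not something you can take as granted input. What the equivariant spectral sequence of \cite{PS-mrl} provides is a comparison whose $E_1$-page involves an Aomoto-type complex, and degeneration/convergence issues are exactly why that method yields only the one-way implication recorded as Theorem \ref{thm:ps}. (Two further technical problems with your setup: when $d\ne 0$ the universal differential $d\otimes 1+\mu$ on $H^{\hdot}(A)\otimes S$ is not homogeneous for the grading of $S$, so $B_i(A)$ is not a graded module and the cone argument for its support also fails; and the support of the homology of a complex of free modules is not literally the fiberwise jump locus --- one must juggle $\Tor$ terms, which is manageable for the union over $i\le q$ but needs to be said.) The paper avoids needing any such module isomorphism by working entirely with supports and germs: Proposition \ref{prop:isol} converts finite-dimensionality of the completion into ``$1$ is isolated in $\bigcup_{i\le q}\supp H_i(X^{\ab},\C)$''; Theorem \ref{thm:ps-plms} identifies that support with $\bigcup_{i\le q}\VV^i(X)$; and Theorem \ref{thm:dp-ccm} identifies the germ at $1$ of the latter with the germ at $0$ of $\bigcup_{i\le q}\RR^i(A)$. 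Reformulating your first step as a statement about supports and germs, rather than an isomorphism of completed modules, is essentially forced, and leads back to the paper's argument.
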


A particularly interesting class of spaces to which Theorem \ref{thm:intro} 
applies consists of smooth, connected, quasi-projective varieties, for which 
we may take as a suitable $\cdga$ model the Gysin model constructed 
by Morgan in \cite{M}.   Our theorem extends results from both \cite{DP-ann} 
and \cite{PS-mrl}, where only the formal case was considered, i.e., the case 
when $A$ may be taken to be the cohomology ring $H^{\hdot}(X, \C)$
with zero differential, and $\RR^i_r (X):=\RR^i_r (A)$ are the usual 
resonance varieties of $X$.  

The classical theory of Alexander polynomials of knots and links 
has a vast and fruitful generalization.  Given a connected, finite-type 
CW-complex $X$, the characteristic varieties $\VV^i_r(X)$ are certain 
algebraic subsets of the character torus $H^1(X,\C^*)$, defined as the 
jump loci for cohomology with coefficients in rank $1$ local systems 
on $X$.  (In the context of K\"ahler geometry, these varieties are also 
known as the Green--Lazarsfeld sets \cite{GL87, GL91}.) 
In degree $i=1$, the characteristic varieties depend only 
on the fundamental group $\pi=\pi_1(X)$, and provide 
powerful invariants for this group.  When $X$ is a 
quasi-projective manifold as above, the set 
$\VV^1_1(\pi)$ controls fibrations 
of $X$ onto curves of general type, cf.~Arapura \cite{A}.

Let $X$ be a connected, quasi-projective manifold which is not a curve; 
that is, $X= \overline{X}\setminus D$, where $\overline{X}$ is a 
projective manifold of dimension at least $2$, and $D$ is a 
union of smooth divisors with normal crossings. 
Let $D=\bigcup_{k=1}^m D^k$ be the decomposition of $D$ into {\it connected}\/
components.  For each index $k$, let $I_k$ be the intersection matrix 
associated to the divisor $D^k$; more specifically, $I_k=(D^k_i \cdot D^k_j)$, 
where $D^k_i$ and $D^k_j$ run through the irreducible components of $D^k$.
Denoting by $U_{(u)}$ the germ of the analytic set $U$ at a point $u$, 
we may state our second main result, as follows. 

\begin{theorem}
\label{thm2:intro} 
Assume that the intersection matrix $I_k$ is either negative or 
positive definite, for each $k=1,\ldots , m$.  Then there is an analytic germ 
isomorphism $H^1(X,\C^*) _{(1)} \cong H^1(\overline{X},\C)_{(0)}$
which induces  analytic germ isomorphisms 
$\VV^1_r(X)_{(1)} \cong\RR^1_r(\overline{X})_{(0)}$, for all $r\ge 0$.
\end{theorem}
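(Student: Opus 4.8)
The plan is to pass to Morgan's explicit algebraic model of $X$ and then reduce the assertion to a purely algebraic comparison of resonance germs. Since $X=\overline{X}\setminus D$ is smooth quasi-projective, it carries a finite Gysin model $A$ \cite{M}, whose low-degree part is $A^0=\C$, $A^1=H^1(\overline{X})\oplus\bigoplus_i\C\,e_i$ and $A^2=H^2(\overline{X})\oplus\bigoplus_i H^1(D_i)\,e_i\oplus\bigoplus_{|S|=2}H^0(D_S)\,e_S$, with differential determined by $d\,e_i=[D_i]\in H^2(\overline{X})$, the Gysin image of the fundamental class of the irreducible component $D_i$, and with $H^{\hdot}(\overline{X})$ sitting inside $A$ as the subalgebra on which $d$ vanishes. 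As $A$ is finite, it is a $q$-finite $q$-model for every $q$, so the general germ isomorphism between the characteristic varieties of a space and the resonance varieties of a finite model, applied to $A$, yields analytic germ isomorphisms $\VV^1_r(X)_{(1)}\cong\RR^1_r(A)_{(0)}$ for all $r$. It therefore suffices to construct compatible analytic germ isomorphisms $\RR^1_r(A)_{(0)}\cong\RR^1_r(\overline{X})_{(0)}$, together with the matching of the two ambient germs.

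I would first settle the ambient identification. The differential formula gives $H^1(A)=Z^1(A)=H^1(\overline{X})\oplus\ker\gamma$, where $\gamma\colon\bigoplus_i\C\,e_i\to H^2(\overline{X})$ sends $e_i$ to $[D_i]$. I claim the definiteness hypothesis forces $\gamma$ to be injective. Indeed, a relation $\sum_i c_i[D_i]=0$ may be analysed one connected component at a time: restricting to $D^k$ and pairing against the classes $[D_j^k]$ through the intersection form recovers the linear system $I_k\,c=0$, and since each $I_k$ is definite, hence nonsingular, we get $c=0$. Thus $H^1(A)=H^1(\overline{X})$ and $b_1(X)=b_1(\overline{X})$; the exponential map $H^1(X,\C)\to H^1(X,\C^*)$, a local analytic isomorphism at the origin, then delivers the asserted ambient germ isomorphism $H^1(X,\C^*)_{(1)}\cong H^1(\overline{X},\C)_{(0)}$.

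The heart of the matter is the resonance comparison. For $\omega\in H^1(\overline{X})=H^1(A)$, the first resonance is computed from the Aomoto complex $A^0\xrightarrow{d_\omega}A^1\xrightarrow{d_\omega}A^2$ with $d_\omega=d+\omega\wedge(-)$. Writing an element of $A^1$ as $\beta+\sum_i c_i e_i$, its cocycle condition splits into $\omega\,\beta+\sum_i c_i[D_i]=0$ in $H^2(\overline{X})$ and $c_i\,(\omega|_{D_i})=0$ in $H^1(D_i)$ for each $i$, whereas the formal Aomoto complex of $H^{\hdot}(\overline{X})$ imposes only $\omega\,\beta=0$. I would show that for $\omega$ in a neighborhood of $0$ in $\RR^1_r(\overline{X})$ every such cocycle is forced to have $c=0$, so that the two complexes have the same first cohomology and the jump conditions coincide. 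This is where definiteness re-enters: the linear independence of the $[D_i]$ determines $c$ from the first equation, while the self- and mutual-intersection data recorded by the definite forms $I_k$ obstruct the simultaneous vanishing of the restrictions $\omega|_{D_i}$ that a nonzero $c$ would require in the second equation. Performing this for every $r$ and checking compatibility produces the analytic germ isomorphisms $\RR^1_r(A)_{(0)}\cong\RR^1_r(\overline{X})_{(0)}$; composing with the reduction of the first paragraph completes the proof.

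I expect the last point to be the main obstacle, namely ruling out spurious first-resonance through the origin created by the Gysin generators $e_i$. Without definiteness, divisor directions along which $\omega|_{D_i}$ degenerates could produce extra local components of $\RR^1_r(A)$ at $0$, corresponding on the topological side to translated or combinatorial components of $\VV^1_r(X)$ meeting the identity; the definiteness of the intersection matrices is precisely what excludes these. A further subtlety is to upgrade the set-theoretic coincidence of the two resonance loci to an isomorphism of analytic germs, that is, to match their defining ideals; this refinement is delicate, but it is ensured by the functoriality already present in the germ isomorphism theorem used in the first step, combined with the explicit comparison of Aomoto differentials above.
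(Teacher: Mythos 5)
Your overall architecture coincides with the paper's: invoke the germ isomorphism $\VV^1_r(X)_{(1)}\cong\RR^1_r(A)_{(0)}$ for the Gysin model $A$ (Theorem \ref{thm:dp-ccm}), identify $H^1(A)$ with $H^1(\overline{X},\C)$ via the injectivity of $e_i\mapsto [D_i]$ (this is Lemma \ref{lem=iotabis}, and your derivation of it from the invertibility of the blocks $I_k$ is correct), and then compare $\RR^1_r(A)$ with $\RR^1_r(\overline{X})$. The gap is in this last step, which you yourself flag as the heart of the matter but whose justification as written does not work. From the cocycle equations $\omega\beta+\sum_i c_i[D_i]=0$ in $H^2(\overline{X})$ and $c_i\,\iota_i^*(\omega)=0$ in $H^1(D_i)$, linear independence of the classes $[D_i]$ does \emph{not} determine $c$: the term $\omega\beta$ is an unknown element of $H^2(\overline{X})$ and could a priori equal $-\sum_i c_i[D_i]$ with $c\ne 0$. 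Nor does definiteness ``obstruct the simultaneous vanishing of the restrictions $\iota_i^*(\omega)$'': nothing prevents $\iota_i^*(\omega)=0$ for every $i$, so the mechanism you describe cannot close the argument.

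The correct mechanism (the paper's Lemma \ref{lem=final}) is a case split on the index $i$. If $\iota_i^*(\omega)\neq 0$, the second equation gives $c_i=0$ outright. For the remaining indices one applies $\iota_i^*$ to the \emph{first} equation: since $\iota_i^*(\omega)=0$, the troublesome term $\iota_i^*(\omega\beta)=\iota_i^*(\omega)\,\iota_i^*(\beta)$ vanishes, and pairing with $\al^{n-2}$ and $[D_i]$ via \eqref{eq:ddot-bis} yields the linear system $I'c'=0$, where $I'$ is the principal submatrix of the intersection matrix indexed by $\{i:\iota_i^*(\omega)=0\}$. It is here, and only here, that definiteness (as opposed to mere invertibility) enters: every principal submatrix of a definite matrix is again definite, hence invertible, whereas an invertible intersection matrix could have singular principal minors. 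This forces $c=0$ for \emph{all} $\omega\in H^1(A)$, so $\RR^1_r(A)=\RR^1_r(\overline{X})$ holds globally, not merely near $0$ as you hedge; and since the germs in the statement are reduced, the set-theoretic identity inside $H^1(A)=H^1(\overline{X},\C)$ already gives the asserted germ isomorphisms, with no further matching of defining ideals required. Your proposal assembles all the right ingredients but, as written, asserts rather than proves the key comparison.
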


In \cite{DP-ccm}, the germs $\VV^i_r(X)_{(1)}$ were identified with 
the germs $\RR^i_r(A)_{(0)}$, with no additional assumptions on $X$, 
by choosing $A$ to be the Gysin model associated to the 
above compactification of $X$. The main point in Theorem \ref{thm2:intro}
is the replacement of the Gysin model by the smaller, much simpler 
sub-$\cdga$ $(H^{\hdot}(\overline{X}, \C), d=0)$.
This theorem extends similar results from \cite{DPS, DPSquasi}.

The unifying idea behind both Theorem \ref{thm:intro} and 
Theorem \ref{thm2:intro} is the description of germs of 
characteristic varieties of spaces in terms of suitable 
$\cdga$ models, established in \cite{DP-ccm} and further 
elaborated in \cite{BW, MPPS}.  

Each of these theorems considers some rather intricate objects, 
namely, the Alexander invariants and the characteristic varieties 
of a reasonably nice space, and manages to relate these 
objects (upon completion, or by passing to germs at the origin) 
to some simpler objects, namely, the resonance varieties of 
an appropriate $\cdga$. 

\section{Some commutative algebra}
\label{sect:completion}

Let $R$ be a Noetherian ring, $\m \subset R$ a maximal ideal, 
and $M$ a finitely generated $R$-module. We will denote by 
$\widehat{M}$ the $\m$-adic completion of $M$, and view it 
as a module over $\widehat{R}$, the $\m$-adic completion of $R$. 

Let $\m_0=\m\cdot R_\m$ 
be the maximal ideal of the localized ring $R_\m=S^{-1}R$,
where $S=R \setminus \m$. The following result says that completion 
with respect to $\m$ corresponds to `analytic localization', and 
hence is stronger than algebraic localization.

\begin{proposition}
\label{prop:p1} 
The natural morphism $M \to M_{\m}=S^{-1}M$ induces an isomorphism 
$\widehat M \to \widehat {M_{\m}}$, where the first completion is with 
respect to $\m$, and the second completion is with respect to $\m_0$.
\end{proposition}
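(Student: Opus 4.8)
The plan is to reduce the claimed isomorphism to a family of isomorphisms at each finite stage of the two filtrations, and then pass to the inverse limit. Writing $\widehat{M} = \varprojlim_n M/\m^n M$ and $\widehat{M_{\m}} = \varprojlim_n M_{\m}/\m_0^n M_{\m}$, it suffices to produce isomorphisms $M/\m^n M \isom M_{\m}/\m_0^n M_{\m}$ induced by the localization map $M \to M_{\m}$ and compatible with the transition maps of the two inverse systems; an isomorphism of inverse systems then yields an isomorphism of the limits, and by construction the resulting map $\widehat M \to \widehat{M_{\m}}$ is the one induced by $M \to M_{\m}$.

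The heart of the argument is the observation that, because $\m$ is \emph{maximal}, the quotient $R/\m^n$ is an Artinian local ring with maximal ideal $\m/\m^n$. Consequently, every element $s \in S = R \setminus \m$ has image outside $\m/\m^n$, hence is a unit in $R/\m^n$, and therefore acts invertibly on the $R/\m^n$-module $M/\m^n M$. This means $M/\m^n M$ is already $S$-local, so the natural map $M/\m^n M \to (M/\m^n M)_{\m}$ is an isomorphism. To identify the target, I would use that localization is exact and compatible with powers of ideals: applying $(-)_{\m}$ to the sequence $0 \to \m^n M \to M \to M/\m^n M \to 0$ gives $(M/\m^n M)_{\m} \cong M_{\m}/(\m^n M)_{\m}$, and since $(\m^n M)_{\m} = \m_0^n M_{\m}$ this reads $(M/\m^n M)_{\m} \cong M_{\m}/\m_0^n M_{\m}$. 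Composing the two steps produces the desired level-$n$ isomorphism $M/\m^n M \isom M_{\m}/\m_0^n M_{\m}$.

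The only point requiring real care is this level-$n$ isomorphism, where the hypothesis that $\m$ is maximal is essential: it is precisely what forces $R/\m^n$ to be local and thereby renders localization at $S$ invisible on $M/\m^n M$. Once the level-$n$ isomorphisms are in hand, their compatibility with the transition maps is automatic, since all of them are induced by the single map $M \to M_{\m}$ together with the naturality of localization, so no further obstacle arises in passing to the limit. It is worth remarking that neither the Noetherian hypothesis nor the finite generation of $M$ actually enters this particular argument; they serve instead to guarantee the good behaviour of the completions exploited elsewhere.
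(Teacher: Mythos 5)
Your proof is correct, and it is organized differently from the paper's. The paper first reduces to the case $M=R$ by invoking Eisenbud's Theorem~7.2 (completion of a finitely generated module over a Noetherian ring is $M\otimes_R\widehat R$), and only then proves the level-$k$ isomorphism $R/\m^k\cong R_\m/\m_0^k$; you instead work level-by-level with the module itself, establishing $M/\m^nM\cong M_\m/\m_0^nM_\m$ directly and passing to the inverse limit. The key mechanism is the same in both arguments --- every $s\in S$ becomes a unit modulo $\m^n$ because $\m$ is the only maximal (indeed the only prime) ideal containing $\m^n$; the paper proves this by the explicit radical argument with $I=\m^k+(s)$, while you phrase it as $R/\m^n$ being local. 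Your route buys something real: as you note, it uses neither the Noetherian hypothesis nor finite generation of $M$, whereas the paper's reduction to $M=R$ genuinely needs both. One small caveat: you call $R/\m^n$ \emph{Artinian} local, but without Noetherianity you only know it is local (a ring whose unique prime is $\m/\m^n$); since all you use is that elements outside $\m/\m^n$ are units, ``local'' suffices and the word ``Artinian'' should be dropped if you want the hypothesis-free version of the statement.
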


\begin{proof}
Applying Theorem 7.2 from \cite{Ei}, we see 
that $\widehat M = M \otimes_R \widehat R$ 
and $\widehat M_\m = M_\m \otimes_{R_\m} \widehat R_\m$. 
Since $M_\m=M \otimes_R  R_\m$, we also have 
$\widehat M_\m = M \otimes_{R} \widehat R_\m$.  
Hence, it is enough to consider only the case $M=R$.

To prove that $\widehat R \cong \widehat {R_\m}$, 
it is enough to show that there are isomorphisms
\[
R/\m^k \to R_\m/\m_0^k
\]
for all $k \in \N$. Using Proposition 2.5 from \cite{Ei}, 
we have that 
\[
R_\m/\m_0^k \cong S^{-1}R/S^{-1}\m^k \cong S^{-1}(R/\m^k).
\]

We claim that any element $s \in S$ becomes invertible in $R/\m^k $. 
To see this, let $I=\m^k+(s)$.  This ideal cannot be contained in a 
maximal ideal $\n$, since $\m^k \subset I \subset \n$ implies, 
by taking radicals, that $\m \subset \n$, and hence $\m=\n$. 
This is a contradiction, since $s \in \n \setminus \m$. It follows 
that $I=R$ and hence there are elements $x \in \m^k$ and $t \in R$
such that $x+st=1$.

The claim shows that $S^{-1}(R/\m^k) \cong R/\m^k$, thereby 
concluding the proof.
\end{proof}

Assume now in addition that $R$ is a finitely generated $\C$-algebra, 
and let  $\specm (R)$ be its maximal spectrum. 
The {\em support}\/ of the $R$-module $M$, denoted 
$\supp(M)$ is the subvariety of $\specm (R)$ defined by 
the annihilator ideal  of $M$ in $R$, denoted $\ann(M)$. 

\begin{proposition}
\label{prop:isol}
Let $M$ be a finitely-generated $R$-module, let  $\m$ be a 
maximal ideal in $R$, and let $\widehat{M}$ be the $\m$-adic 
completion of $M$.  
Then, the following conditions are equivalent.
\begin{enumerate}
\item \label{f1}
The $\C$-vector space $\widehat{M}$ is finite-dimensional. 
\item \label{f2}
The point $\m$ is isolated with respect to $\supp (M)$.
\end{enumerate}
\end{proposition}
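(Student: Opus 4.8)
The plan is to use Proposition~\ref{prop:p1} to pass to the local ring at $\m$, where the claim reduces to the classical equivalence between finite length and finite-dimensional completion. Write $S=R_\m$ for the localization, with maximal ideal $\m_0$, and set $N=M_\m$, a finitely generated $S$-module. By Proposition~\ref{prop:p1}, the natural map induces an isomorphism $\widehat{M}\isom\widehat{N}$, where $\widehat N$ is the $\m_0$-adic completion of $N$; in particular, $\widehat M$ is finite-dimensional over $\C$ if and only if $\widehat N$ is. On the other side, for finitely generated $M$ one has $\ann(N)=\ann(M)\,S$, so that $R_\m/\ann(N)=(R/\ann M)_\m$. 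The local dimension of the variety $\supp(M)=V(\ann M)$ at the point $\m$ is the Krull dimension of this local ring, namely $\dim_S N$; thus condition~\ref{f2} (that $\m$ be isolated with respect to $\supp M$) is equivalent to $\dim_S N=0$, i.e., to $N$ having finite length over $S$. It therefore suffices to prove, for a Noetherian local $\C$-algebra $(S,\m_0)$ and a finitely generated module $N$, that $\widehat N$ is finite-dimensional over $\C$ if and only if $N$ has finite length.

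For this last equivalence, first recall that, since $R$ is a finitely generated $\C$-algebra and $\m$ is maximal, the Nullstellensatz gives $S/\m_0\cong\C$; hence any $S$-module annihilated by a power of $\m_0$ has length equal to its $\C$-dimension. If $N$ has finite length, then $\supp N=\{\m_0\}$, so $\sqrt{\ann N}=\m_0$, and since $\m_0$ is finitely generated we get $\m_0^kN=0$ for some $k$. Consequently $N/\m_0^jN=N$ for all $j\ge k$, whence $\widehat N=\varprojlim_j N/\m_0^jN=N$ is finite-dimensional. Conversely, suppose $N$ has infinite length, so $\m_0^kN\ne0$ for every $k$. Applying Nakayama's lemma to the finitely generated module $\m_0^kN$, an equality $\m_0^kN=\m_0^{k+1}N$ would force $\m_0^kN=0$; hence the inclusions $\m_0^kN\supsetneq\m_0^{k+1}N$ are strict for all $k$, and each quotient $\m_0^jN/\m_0^{j+1}N$ is a nonzero $\C$-vector space. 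Therefore $\dim_\C N/\m_0^kN=\sum_{j<k}\dim_\C\bigl(\m_0^jN/\m_0^{j+1}N\bigr)\ge k$, and since the canonical surjection $\widehat N\surj N/\m_0^kN$ gives $\dim_\C\widehat N\ge k$ for all $k$, the completion $\widehat N$ is infinite-dimensional.

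The decisive input is Proposition~\ref{prop:p1}: it allows us to discard the irreducible components of $\supp(M)$ lying away from $\m$, which contribute nothing to the $\m$-adic completion, and to argue entirely inside the local ring $R_\m$. The step I expect to require the most care is the translation of the geometric condition in~\ref{f2}---isolatedness of $\m$ in $\supp(M)$---into the algebraic condition that $M_\m$ have dimension zero, which rests on the compatibility of annihilators with localization for finitely generated modules; once this is in place, the remaining finiteness equivalence is elementary, needing only Nakayama's lemma and the surjectivity of completion onto its finite-dimensional quotients.
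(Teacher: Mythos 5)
Your proof is correct, and its skeleton matches the paper's: both arguments first invoke Proposition~\ref{prop:p1} to replace $\widehat{M}$ by the $\m_0$-adic completion of $M_\m$, and then identify finite-dimensionality of that completion with the zero-dimensionality of $M_\m$, hence with the isolatedness of $\m$ in $\supp(M)$. The difference is in how the middle equivalence is established. The paper proceeds by citation: finite $\C$-dimension of $\widehat{M_\m}$ is equated with finite length, i.e.\ Krull dimension zero of $\widehat{M_\m}$ (Eisenbud, Prop.~10.8), then Eisenbud's Cor.~12.5 is used to transfer dimension zero from the completed module back to $M_\m$, and Cor.~2.18 converts that into the geometric statement about $\supp(M)$. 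You instead never compute the dimension of the completed module: you compare $\widehat{N}$ directly with its finite quotients $N/\m_0^kN$, using Nakayama to show that an infinite-length $N$ has strictly decreasing powers $\m_0^kN$ and hence $\dim_\C \widehat{N}\ge k$ for all $k$, while a finite-length $N$ is killed by some $\m_0^k$ so that $\widehat{N}=N$. This makes the argument self-contained (modulo the Nullstellensatz giving residue field $\C$, which both proofs need) and avoids the appeal to the compatibility of Krull dimension with completion. Your translation of condition~(\ref{f2}) via $\ann(M_\m)=\ann(M)_\m$ is the same content as the paper's use of Cor.~2.18. One cosmetic point: the paper disposes of the case $\widehat{M}=0$ (equivalently $\m\notin\supp(M)$) separately before running the dimension argument; your phrasing ``condition~(\ref{f2}) is equivalent to $\dim_S N=0$'' silently absorbs the degenerate case $N=0$, where the dimension is conventionally $-\infty$ rather than $0$, so it is worth stating explicitly that $\m\notin\supp(M)$ gives $N=0$ and both conditions hold trivially.
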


\begin{proof}
Suppose first that $\widehat M=0$.  This condition is equivalent to 
$M_\m=0$, that is, $\m\notin \supp(M)$. 

Suppose now that $\widehat M \ne 0$.
By Proposition \ref{prop:p1}, $\dim_{\C}\widehat M <\infty$ if and only if 
$\dim_{\C}\widehat M_\m <\infty$.  In turn, the second condition 
is equivalent to saying that the module $\widehat M_\m$ has finite length; 
in other words, its Krull dimension is zero, see \cite[Proposition 10.8]{Ei}. 
By \cite[Corollary 12.5]{Ei},  
this is equivalent to the Krull dimension of $M_\m$ being equal to zero.
By \cite[Corollary 2.18]{Ei}, the last condition is equivalent to the fact that
$\{ \m \}$ is an irreducible component of $\supp (M)$.
\end{proof}
  
\section{Resonance varieties of a $\cdga$}
\label{sect:res}

Let $A=(A^{\hdot},d)$ be a commutative, differential 
graded algebra ($\cdga$) over the field of 
complex numbers.  That is to say,  $A=\bigoplus_{i\ge 0} A^i$ 
is a graded $\C$-vector space, endowed with a 
graded-commutative multiplication 
map $\cdot\colon A^i \otimes A^j \to A^{i+j}$ and a differential 
$d\colon A^i\to A^{i+1}$ satisfying the graded Leibnitz rule. 

We say that $A$ is $q$-finite, for some $q\ge 1$, 
if $A^0=\C$ and $A^i$ is finite-dimensional, for each $i\le q$.
Furthermore, we say that two $\cdga$'s $A$ and $B$ have 
the same $q$-type 
if there is a zig-zag of $\cdga$ maps from one to the other, 
inducing isomorphisms in cohomology in degree up to $q$, 
and a monomorphism in cohomology in degree $q+1$.  

For each cohomology class $\omega\in H^1(A)$, 
we make $A$ into a cochain complex, 
\begin{equation}
\label{eq:aomoto}
\xymatrix{(A , d_{\omega})\colon  \ 
A^0  \ar^(.65){d_{\omega}}[r] & A^1
\ar^(.5){d_{\omega}}[r] 
& A^2   \ar^(.5){d_{\omega}}[r]& \cdots },
\end{equation}
using as differentials the maps given by  
\begin{equation}
\label{eq:dw}
d_{\omega} (\alpha)= d \alpha  + 
\omega \cdot \alpha, 
\end{equation}
for all $\alpha \in A$.  
Computing the homology of these chain complexes 
for various values of the parameter $\omega$, and 
keeping track of the resulting Betti numbers  yields some interesting 
subsets of the affine space $H^1(A)$.
More precisely, for each $i$ and $r$, define  
\begin{equation}
\label{eq:rra}
\RR^i_r(A)= \{\omega \in H^1(A)   
\mid \dim_{\C} H^i(A, d_{\omega}) \geq r\}
\end{equation}
to be the $i$-th {\em resonance variety of depth} $r$ of the $\cdga$ $(A, d)$.  
It is readily seen that these sets are Zariski closed 
subsets of the affine space $H^1(A)$, for all $i\le q$.  
The set $\RR^i_1(A)$ is simply denoted by $\RR^i(A)$.

When the differential $d$ is zero, the resonance varieties 
of the algebra $A$ are homogeneous subsets of $H^1(A)=A^1$. 
In general, though, the resonance varieties of a $\cdga$ 
are not homogeneous. Here is a simple example, extracted 
from \cite{MPPS}. 

\begin{exm}
\label{ex:solv}
Let $A$ be the exterior algebra on generators $x,y$ in 
degree $1$, endowed with the differential given by $d{x}=0$ 
and $d{y}=y\wedge x$. Then $H^1(A)=\C$, generated by $x$, 
and $\RR^1(A)=\{0,1\}$.
\end{exm}

\section{Algebraic models and resonance}
\label{sect:models}

We now return to the topological setting from the Introduction. 
Throughout, $X$ will be a connected CW-complex 
with finite $q$-skeleton, for some $q\ge 1$
(for short, a $q$-finite space).  
There are two kinds of resonance varieties that one 
can associate to the space $X$, depending on which 
$\cdga$ one uses to approximate it.  

The most direct approach is to take the cohomology algebra 
$H^{\hdot}(X,\C)$, endowed with the zero differential. Let 
$\RR^i_r(X)$ be the resonance varieties of this $\cdga$. 
By the above discussion, these sets are Zariski closed, homogeneous 
subsets of the affine space $H^1(X,\C)$, for all $i\le q$.  
As before, we will denote $\RR^i_1(X)$ by $\RR^i(X)$. 

While relatively easy to compute, these varieties 
may not provide accurate enough information about our space, 
since the cohomology algebra $H^{\hdot}(X,\C)$ may not 
be a (rational homotopy) model for $X$. 

We thus turn to Sullivan's model of polynomial forms on $X$ 
(see \cite{Su}).   
This model, which we denote by $(\Omega^{\hdot}(X),d)$, is a 
$\cdga$ defined over $\Q$ which imitates the de Rham 
algebra of differential forms on a smooth manifold;  
in particular, $H^{\hdot}(\Omega(X))\cong H^{\hdot}(X,\C)$, 
as graded rings. 

The difficulty is that, in general, Sullivan's model does not have 
good finiteness properties. So let us assume $\Omega(X)$ 
has the same $q$-type as a $q$-finite $\cdga$ $(A,d)$. 
(As pointed out in \cite{DP-ccm, MPPS}, this assumption is 
satisfied in many situations of geometric interest.) In this case, the 
resonance varieties $\RR^i_r(A)$ are identified with Zariski-closed subsets 
of $H^1(X, \C)$, for all $i\le q$, since
\begin{equation}
\label{eq=ident}
H^1(A) \cong H^1(\Omega (X)) \cong H^1(X, \C) .
\end{equation} 

The next result provides a comparison between the two types 
of resonance varieties associated to our space $X$, under 
the above assumptions. 

\begin{theorem}[\cite{MPPS}]
\label{thm:mpps}
Suppose $X$ is $q$-finite and $\Omega^{\hdot}(X)$ has the same $q$-type as a 
$q$-finite $\cdga$ A. Then, for all $i\le q$, the tangent 
cone at $0$ to the resonance variety $\RR^i_r(A)$ is contained 
in $\RR^i_r(X)$.
\end{theorem}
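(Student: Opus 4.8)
The plan is to reduce the comparison to an intrinsic statement about the $\cdga$ $A$ and then to linearize it by a homotopy-transfer argument. First I would observe that, since the $q$-type equivalence between $\Omega^{\hdot}(X)$ and $A$ induces a ring isomorphism $H^{\hdot}(A)\cong H^{\hdot}(X,\C)$ in degrees $\le q$ and a ring monomorphism in degree $q+1$, compatibly with the identification \eqref{eq=ident} of the ambient spaces $H^1(A)\cong H^1(X,\C)$, the Aomoto complexes $(H^{\hdot}(A),\,\omega\cup -)$ and $(H^{\hdot}(X,\C),\,\omega\cup -)$ have the same cohomology dimensions in each degree $i\le q$ (for $i=q$ this uses that the monomorphism in degree $q+1$ preserves the rank of the cup map out of the finite-dimensional space $H^q$). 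Hence $\RR^i_r(X)=\RR^i_r(H^{\hdot}(A),0)$ for all $i\le q$, and it suffices to prove the intrinsic inclusion that the tangent cone at $0$ to $\RR^i_r(A)$ is contained in $\RR^i_r(H^{\hdot}(A),0)$. Since $0\in\RR^i_r(A)$ iff $\dim_{\C}H^i(A)\ge r$ iff $0\in\RR^i_r(H^{\hdot}(A),0)$, the assertion is vacuous unless $0$ lies in both loci, so I may assume this.

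Next I would transport the family $(A,d_\omega)$ onto cohomology. Choosing a linear splitting of the cochain complex $(A,d)$, with projection $p\colon A\to H^{\hdot}(A)$, inclusion $\iota$ of a space of representatives, and a homotopy $h$ satisfying $\iota p-\id=dh+hd$, the Homological Perturbation Lemma applied to the perturbation $\mu_\omega=\omega\wedge(-)$ of $d$ produces a differential $D_\omega=p\,\mu_\omega(\id-h\mu_\omega)^{-1}\iota$ on $H^{\hdot}(A)$ with $H^i(H^{\hdot}(A),D_\omega)\cong H^i(A,d_\omega)$. Because $A$ is $q$-finite, the sources $H^{i-1}(A)$ and $H^i(A)$ of the relevant components $D^{i-1}_\omega$ and $D^i_\omega$ are finite-dimensional, on which $h\mu_\omega$ is a small operator for $\omega$ near $0$; hence the geometric series converges and $D_\omega$ depends analytically on $\omega$ near the origin. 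Expanding by polynomial degree in $\omega$ gives $D_\omega=\sum_{k\ge 1}D^{(k)}_\omega$ with $D^{(k)}_\omega=p\,\mu_\omega(h\mu_\omega)^{k-1}\iota$ homogeneous of degree $k$; the weight-one term $D^{(1)}_\omega=p\,\mu_\omega\iota$ sends a class $[z]$ to $[\omega\wedge z]=\omega\cup[z]$, so it is exactly the Aomoto differential. Thus $\RR^i_r(A)$ and $\RR^i_r(H^{\hdot}(A),0)$ are the jump loci of the same family $(H^{\hdot}(A),D_\omega)$ and of its leading term $(H^{\hdot}(A),D^{(1)}_\omega)$, respectively.

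The final step is a rescaling-and-semicontinuity argument. A point $\omega_0$ in the tangent cone arises as a limit $c_n\omega_n\to\omega_0$ with $\omega_n\to 0$ in $\RR^i_r(A)$; writing $\omega_n=t_nv_n$ with $v_n=c_n\omega_n\to\omega_0$ and $t_n\to 0$, and passing to a subsequence, I may assume $\rank D^{j}_{\omega_n}=s_j$ is constant for $j=i-1,i$, with $s_{i-1}+s_i\le \dim_{\C}H^i(A)-r$ (the rank form of the condition $\dim_{\C}H^i\ge r$). By linearity of $\mu_\omega$ in $\omega$, $t_n^{-1}D^{j}_{\omega_n}=p\,\mu_{v_n}(\id-t_nh\mu_{v_n})^{-1}\iota\to p\,\mu_{\omega_0}\iota=D^{(1),j}_{\omega_0}$ as $n\to\infty$; since rank is scale-invariant and the locus $\{\rank\le s_j\}$ is closed, $\rank D^{(1),j}_{\omega_0}\le s_j$. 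Summing over $j=i-1,i$ gives $\dim_{\C}H^i(H^{\hdot}(A),D^{(1)}_{\omega_0})\ge r$, that is, $\omega_0\in\RR^i_r(H^{\hdot}(A),0)=\RR^i_r(X)$, as desired.

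The main obstacle I anticipate is making the homotopy transfer rigorous when $A$ is infinite-dimensional in degree $q+1$: there the splitting and the homotopy $h$ involve infinite-dimensional spaces, and one must check that the components $D^{i-1}_\omega$ and $D^i_\omega$ entering $H^i(\,\cdot\,)$ for $i\le q$ are genuine finite-rank operators with convergent perturbation series (which holds because their sources $H^{i-1}(A)$ and $H^i(A)$ are finite-dimensional and $\mu_\omega$ has finite-dimensional image on them), and that the transferred complex really computes $H^{\hdot}(A,d_\omega)$ through degree $q$. Verifying this, together with the identification of the weight-one part as the cup product, is the technical heart; once it is in place, the rescaling argument above is purely formal.
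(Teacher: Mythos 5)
The paper offers no proof of this statement---it is imported verbatim from \cite{MPPS}---so there is no in-paper argument to compare yours against; I can only judge the proposal on its own merits, and on those it is essentially correct. The three stages fit together as claimed: the reduction of $\RR^i_r(X)$ to $\RR^i_r(H^{\hdot}(A),0)$ for $i\le q$ is right, including the degree-$q$ bookkeeping through the degree-$(q+1)$ monomorphism (a commuting square with an isomorphism on $H^q$ and a monomorphism on $H^{q+1}$ does preserve the rank of $\omega\cup(-)$); the transferred differential $D_\omega=p\,\mu_\omega(\id-h\mu_\omega)^{-1}\iota$ is analytic in $\omega$ near $0$ with linear part the Aomoto differential; and the secant-limit description of the tangent cone together with lower semicontinuity of rank applied to $t_n^{-1}D_{t_nv_n}\to p\,\mu_{\omega_0}\iota$ closes the argument. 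You also correctly dispose of the degenerate direction $\omega_0=0$ at the outset, and you only invoke the transfer near $0$, which is all the tangent cone sees; the secant characterization of the tangent cone you use is the standard Whitney comparison for complex algebraic sets and is legitimate here.

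The one genuine soft spot is the one you flag yourself, and your parenthetical does not yet fill it: finite-dimensionality of $H^{i-1}(A)$ and $H^{i}(A)$ guarantees that $D^{i-1}_\omega$ and $D^{i}_\omega$ are well defined, finite rank, and analytic, but it does not by itself yield $H^i(H^{\hdot}(A),D_\omega)\cong H^i(A,d_\omega)$ for $i=q$, because the perturbation-lemma identities must be verified in degree $q+1$, where $A^{q+1}$ may be infinite-dimensional and $(\id-h\mu_\omega)^{-1}$ need not converge. The clean fix is to truncate: replace $A^{q+1}$ by the finite-dimensional subspace $d(A^q)+A^1\cdot A^q$, which contains $d_\omega(A^q)$ for every $\omega$, and kill everything above; the resulting complex computes $H^i(A,d_\omega)$ for all $i\le q$ and all $\omega$, and the perturbation lemma then applies verbatim in finite dimensions. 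With that step written out, your proof is complete and self-contained.
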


As the next example shows, the inclusion from 
Theorem \ref{thm:mpps} can well be strict.  

\begin{exm}
\label{ex:heis}
Let $X$ be the $3$-dimensional Heisenberg nilmanifold.  Then 
$X$ is a circle bundle over the torus, with Euler number $1$; thus, 
$H^1(X,\C)=\C^2$ and all cup products of degree $1$ classes  
vanish.  It follows that $\RR^1(X)$ coincides with $H^1(X,\C)$.  

On the other hand, $X$ admits as a model the exterior 
algebra $A$ on generators $x,y,z$ in degree $1$, 
with differential $dx=dy=0$ and $dz=x\wedge y$. It is 
now a simple matter to check that $\RR^1(A)=\{0\}$, 
thereby proving the claim.
\end{exm}

\section{Characteristic varieties}
\label{sect:cv}

We now turn to another type of homological jump loci associated to 
our space $X$.  Let $\pi=\pi_1(X)$, and let $\Hom(\pi,\C^*)$ be 
the algebraic group of complex-valued, multiplicative characters 
on $\pi$, with identity $1$ corresponding to the trivial representation. 
For each character $\rho\colon \pi\to \C^*$, let 
$\C_{\rho}$ be the corresponding rank $1$ local system on $X$.  

The {\em characteristic varieties}\/ of $X$ are the 
jump loci for homology with coefficients in such local systems, 
\begin{equation}
\label{eq:cvx}
\VV^i_r(X)= \{\rho \in \Hom(\pi,\C^*) \mid \dim_{\C} H_i(X, \C_{\rho}) \geq r\}.
\end{equation}
For each $i\le q$, the sets $\VV^i_r(X)$  are Zariski-closed 
subsets of the character group $\Hom(\pi,\C^*)=H^1(X,\C^*)$. 
The set $\VV^i_1(X)$ is simply denoted by $\VV^i(X)$.

When the space $X$ has an algebraic model $A$ with good finiteness 
properties, the characteristic varieties of $X$ may be identified  
around the identity with the resonance varieties of $A$.  More 
precisely, we have the following basic result. 

\begin{theorem}[\cite{DP-ccm}]  
\label{thm:dp-ccm}
Assume $X$ is $q$-finite and $\Omega^{\hdot}(X)$ has the same 
$q$-type as a $q$-finite $\cdga$ A.  Then, for all $i\le q$ and all 
$r\ge 0$, the germ at $1$ of $ \VV^i_r (X)$ is isomorphic to
the germ at $0$ of $ \RR^i_r (A)$. Furthermore, all 
these isomorphisms are induced by an analytic
isomorphism $H^1(X,\C^*) _{(1)} \cong H^1(A)_{(0)}$   
obtained from the map $\exp_*\colon  
\Hom(\pi,\C)\to\Hom(\pi, \C^*)$. 
\end{theorem}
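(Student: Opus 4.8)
The plan is to factor the asserted germ isomorphism through Sullivan's model $\Omega(X)$, by comparing three analytic families of cochain complexes: the local-system complexes computing $\VV^i_r(X)$, the Aomoto complexes of $\Omega(X)$, and the Aomoto complexes of the $q$-finite model $A$. First I would linearize the character variety near the identity using the exponential map, writing a character $\rho$ close to $1$ as $\rho=\exp_*(\omega)$ with $\omega$ ranging over a germ at $0$ of $H^1(X,\C)=\Hom(\pi,\C)$. Since $X$ has finite $q$-skeleton, in degrees $i\le q$ the jump loci $\VV^i_r(X)$ may be computed from a finitely generated free complex over the relevant coordinate ring, so by universal coefficients the homological loci agree near $1$ with the cohomological loci $\{\rho\mid \dim_\C H^i(X,\C_\rho)\ge r\}$. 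The classical twisted de Rham theorem then identifies $H^i(X,\C_\rho)$ with the Aomoto cohomology $H^i(\Omega(X),d_\omega)$, for a closed $1$-form representing $\log\rho$. Consequently, under the analytic coordinate change $\exp_*$, the germ $\VV^i_r(X)_{(1)}$ is carried to the germ $\RR^i_r(\Omega(X))_{(0)}$.

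It remains to replace $\Omega(X)$ by the $q$-finite model $A$ at the level of resonance germs. I would encode the resonance varieties by a single \emph{universal Aomoto complex}: fixing a basis $e_1,\dots,e_n$ of $H^1(A)$ with dual coordinates $\widehat{e}_1,\dots,\widehat{e}_n$, form the complex of free modules over the completed local ring $\widehat{\OO}$ of $H^1(A)$ at the origin, with differential $d\otimes 1+\sum_j (e_j\wedge-)\otimes\widehat{e}_j$. The jump loci of this complex recover the germs $\RR^i_r(A)_{(0)}$ for $i\le q$ through the theory of Fitting ideals, and here Proposition~\ref{prop:p1} is what licenses passing between the algebraic $\m$-adic completion and the analytic germ. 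A zig-zag of $\cdga$ maps realizing the common $q$-type of $\Omega(X)$ and $A$ induces maps between the corresponding universal Aomoto complexes, and the goal becomes to show these are quasi-isomorphisms in degrees $\le q$ over $\widehat{\OO}$, so that the associated jump loci match.

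This last step is the main obstacle: a $\cdga$ map inducing an isomorphism on ordinary cohomology need \emph{not} induce an isomorphism on the twisted cohomologies $H^i(A,d_\omega)$ for $\omega\ne 0$, so the comparison must be forced to hold in a formal neighborhood of the origin. I would filter each universal Aomoto complex by powers of the maximal ideal of $\widehat{\OO}$, equivalently by the weight of the deformation parameter. On the associated graded the twisting term drops out and the differential reduces to the untwisted $d$, so the induced map of $E_1$-pages is governed solely by the ordinary cohomology isomorphism supplied by the $q$-type equivalence, together with the monomorphism in degree $q+1$ that controls the top relevant coboundary. A standard comparison argument for these complete, exhaustively filtered complexes then promotes the graded isomorphism to a genuine quasi-isomorphism through degree $q$, yielding the germ isomorphisms $\VV^i_r(X)_{(1)}\cong\RR^i_r(A)_{(0)}$, all induced by the single coordinate change coming from $\exp_*$.
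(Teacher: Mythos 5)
First, a point of order: the paper does not prove this theorem --- it is quoted from \cite{DP-ccm} --- so your attempt can only be measured against the argument in that source. Your second half is sound and is essentially the technique of \cite{DP-ccm} and \cite{BW}: encode resonance by a universal Aomoto complex of finite free modules over the coordinate ring of $H^1(A)$, transport a zig-zag of $q$-equivalences to maps of these complexes, filter by powers of the maximal ideal so that the twisting term vanishes on the associated graded, and use the comparison theorem for complete filtered complexes together with the invariance of Fitting-ideal jump loci under quasi-isomorphism of finite free complexes; Proposition \ref{prop:p1} is indeed what converts the $\m$-adic statement into one about analytic germs on the $A$-side.

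The genuine gap is your first comparison, where you identify $H^i(X,\C_\rho)$ with $H^i(\Omega^{\hdot}(X),d_\omega)$ for $\rho=\exp_*(\omega)$ by appeal to ``the classical twisted de Rham theorem.'' No such theorem is available for Sullivan's polynomial forms. Already on a $1$-simplex the twisted polynomial Poincar\'e lemma fails: for $\omega=c\,dt$ with $c\ne 0$, the operator $p\mapsto p'+cp$ is bijective on $\C[t]$, so $H^0(\Omega^{\hdot}(\Delta^1),d_\omega)=0$ while $H^0(\Delta^1,\C_\rho)=\C$; the gauge transformation $e^{-ct}$ that trivializes the twist in the smooth category is not polynomial. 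Relatedly, since $\Omega^0(X)\ne\C$, the complex $(\Omega^{\hdot}(X),d_\omega)$ depends on the chosen cocycle representative of $\omega$, so your intermediate object $\RR^i_r(\Omega^{\hdot}(X))$ is not even well defined as a subset of $H^1(X,\C)$ without further care. This identification is not an off-the-shelf input but the heart of the theorem: in \cite{DP-ccm} it is replaced by a comparison between the $I$-adically completed equivariant cellular cochain complex of $X$ (whose Fitting ideals cut out $\VV^i_r(X)$ near $1$, using that $X$ is $q$-finite) and the completed universal Aomoto complex of a minimal model, the bridge being Sullivan's theory and the Malcev completion of $\pi$ rather than a pointwise de Rham isomorphism. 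As written, your argument assumes the hardest step. A smaller point: $\VV^i_r(X)$ is defined here via homology, so passing to cohomological loci introduces $\rho\mapsto\rho^{-1}$, i.e.\ a sign on the tangent space, which must be tracked because resonance varieties of a $\cdga$ with $d\ne 0$ need not be symmetric under $\omega\mapsto-\omega$ (cf.\ Example \ref{ex:solv}).
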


A precursor to this theorem can be found in the pioneering work 
of Green and Lazarsfeld \cite{GL87, GL91} on the cohomology 
jump loci of compact K\"{a}hler manifolds.   An important 
particular case (for $i=1$ and under a $1$-formality assumption) 
was first established in \cite{DPS}.  

\section{Alexander-type invariants}
\label{sect:alex inv}

As before, let $X$ be a connected CW-complex. 
Let $\nu \colon \pi \surj G$ be an epimorphism 
from the fundamental group of $X$ to an abelian group $G$, 
and let $X^{\nu}$ be the corresponding Galois cover.  The 
action of the group of deck-transformations on this cover   
puts a $\C[G]$-module structure on the homology groups 
$H_i(X^{\nu}, \C)$. We shall call these modules the {\em 
Alexander-type invariants}\/ of the cover $X^{\nu} \to X$. 

Assume now that the CW-complex $X$ has finite $q$-skeleton, 
for some $q\ge 1$. Since $X$ has finitely many $1$-cells, its  
fundamental group is finitely generated.  Thus, the quotient $G$ 
is a finitely-generated abelian group, and the group-algebra 
$R=\C[G]$ is a commutative, finitely generated $\C$-algebra.  Likewise, the 
Alexander-type invariants $H_i(X^{\nu}, \C)$ are finitely-generated 
$R$-modules, for all $i\le q$.  

\begin{theorem}[\cite{PS-plms}]  
\label{thm:ps-plms}
With notation as above, the following equality holds:
\begin{equation*}
\label{eq:ps-plms}
\nu^*\Big(\bigcup_{i\le q} \supp (H_i(X^{\nu}, \C))\Big)=
\im (\nu^*) \cap \Big(\bigcup_{i\le q} \VV^i (X)\Big),
\end{equation*}
where $\nu^*\colon \Hom(G,\C^*) \to \Hom(\pi,\C^*)$ is 
the monomorphism induced by $\nu$.
\end{theorem}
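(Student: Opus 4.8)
The plan is to connect the support of the Alexander-type invariants to the characteristic varieties via the fundamental relationship between homology of covers with constant coefficients and homology of the base with rank-one local system coefficients. The key idea is that $H_i(X^{\nu},\C)$, as a $\C[G]$-module, carries all the information about twisted cohomology of $X$ with characters factoring through $\nu$.

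\medskip

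First I would recall that for a character $\rho \in \Hom(G,\C^*)$, the pullback $\nu^*(\rho) = \rho \circ \nu$ is a character of $\pi$, and the local system $\C_{\nu^*(\rho)}$ on $X$ is obtained by specializing the $\C[G]$-module structure at the maximal ideal $\m_\rho = \ker(\rho)$ of $R=\C[G]$. The crucial ingredient is a relationship of the form
\begin{equation*}
\dim_\C H_i(X,\C_{\nu^*(\rho)}) \geq r \quad\Longleftrightarrow\quad \m_\rho \in \supp\big(\text{something built from } H_i(X^\nu,\C)\big),
\end{equation*}
but because jumps in twisted homology are detected by the support of the Alexander-type modules, I expect the cleaner statement to be that a character $\rho$ lies in $\nu^*(\supp(H_i(X^\nu,\C)))$ precisely when the specialization of the module at $\m_\rho$ is nonzero, which by the general principle of semicontinuity corresponds to $\nu^*(\rho)$ lying in $\VV^i(X)$. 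So the heart of the argument is the module-theoretic identity: $\rho \in \supp(H_i(X^\nu,\C))$ if and only if the localization $H_i(X^\nu,\C)_{\m_\rho} \neq 0$, which holds if and only if $\nu^*(\rho) \in \VV^i(X)$.

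\medskip

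Next I would prove the two inclusions. For the inclusion $\subseteq$, take $\rho \in \bigcup_{i\le q}\supp(H_i(X^\nu,\C))$, so some $H_i(X^\nu,\C)_{\m_\rho}\neq 0$; I would then use a universal-coefficients or Shapiro-type argument relating the stalk of the Alexander invariant at $\m_\rho$ to the twisted homology $H_i(X,\C_{\nu^*(\rho)})$, concluding nonvanishing of the latter, hence $\nu^*(\rho)\in\VV^i(X)$, and trivially $\nu^*(\rho) \in \im(\nu^*)$. For the reverse inclusion $\supseteq$, any point of $\im(\nu^*)$ has the form $\nu^*(\rho)$; if it lies in $\VV^i(X)$ for some $i\le q$, the same specialization correspondence forces $H_i(X^\nu,\C)_{\m_\rho}\neq 0$, putting $\rho$ into the support. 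The injectivity of $\nu^*$ (which follows from $\nu$ being surjective, so that $\nu^*$ dualizes an inclusion of character groups) ensures the correspondence $\rho \leftrightarrow \nu^*(\rho)$ is well-defined and lets us transport the equality across $\nu^*$.

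\medskip

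The main obstacle I anticipate is establishing the precise dictionary between the stalk of the finitely-generated $R$-module $H_i(X^\nu,\C)$ at a maximal ideal and the dimension of twisted homology $H_i(X,\C_{\nu^*(\rho)})$. This requires care because the naive specialization $H_i(X^\nu,\C)\otimes_R \C_\rho$ need not equal $H_i(X,\C_{\nu^*(\rho)})$ on the nose---there can be $\Tor$ correction terms arising from the universal coefficients spectral sequence for the change of rings $\C[G]\to\C$. The technical work is to verify that, after restricting attention to degrees $i\le q$ where the chain complex can be taken to be finitely generated and free (using the finite $q$-skeleton hypothesis), the \emph{support} of the homology module is insensitive to these correction terms, so that nonvanishing of the localized module is equivalent to the relevant jump in twisted homology. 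Once this module-theoretic fact is secured, the rest of the proof is a formal manipulation of supports and the image of $\nu^*$.
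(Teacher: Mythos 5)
The paper does not actually prove this statement: Theorem \ref{thm:ps-plms} is quoted from \cite{PS-plms} without proof, so there is no in-paper argument to compare against. Your strategy is the standard one (and essentially that of \cite{PS-plms}): identify $H_i(X,\C_{\nu^*(\rho)})$ with the homology of $C_{\hdot}(X^{\nu})\otimes_R \C_{\rho}$, where $C_{\hdot}(X^{\nu})$ is the equivariant chain complex of the cover --- a complex of free $R$-modules, finitely generated in degrees $\le q$ by the finite $q$-skeleton hypothesis --- and then translate between vanishing of the localized modules $H_i(X^{\nu},\C)_{\m_{\rho}}$ and vanishing of the specialized homology.

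The one step in your outline that would fail as written is the degree-by-degree dictionary. In your two-inclusions paragraph you pass from $H_i(X^{\nu},\C)_{\m_{\rho}}\neq 0$ to $\nu^*(\rho)\in\VV^i(X)$ with the \emph{same} index $i$, and conversely; both implications are false for a fixed $i$. Already for $X=S^1$ and $\nu=\id$ one has $H_1(X^{\nu},\C)=H_1(\R,\C)=0$, so $\supp H_1(X^{\nu},\C)=\emptyset$, while the trivial character lies in $\VV^1(X)$; the discrepancy is absorbed by $\supp H_0(X^{\nu},\C)=\{1\}$, which is exactly why the theorem is stated with unions over $i\le q$. The correct module-theoretic input is the simultaneous-vanishing statement: for a complex $C$ of free $R$-modules, finitely generated in degrees $\le q$, one has $H_i(C)_{\m}=0$ for all $i\le q$ if and only if $H_i(C\otimes_R R/\m)=0$ for all $i\le q$ (one direction by splitting the exact localized complex of free modules over $R_{\m}$ inductively from degree $0$ upward, the other by Nakayama and induction). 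Your closing paragraph shows you are aware of the $\Tor$ corrections, but the repair is not that the support of each $H_i$ is ``insensitive'' to them --- it is that the whole argument must be run with the unions $\bigcup_{i\le q}$ rather than index by index. With that adjustment, together with the observation that $\supp$ of a finitely generated module is precisely the locus of maximal ideals where the localization is nonzero, your argument goes through.
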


In particular, taking $\nu=\ab$, we see that the variety 
$\bigcup_{i\le q} \VV^i (X)$ coincides with the union up to degree $q$ of the 
support varieties of the Alexander invariants $H_i(X^{\ab}, \C)$.

\section{Completion and resonance}
\label{sect:main}

As above, let $X$ be a $q$-finite CW-complex.  Set $\pi=\pi_1(X)$, 
and let $\nu \colon \pi \surj G$ be an epimorphism to an 
abelian group $G$.  
Let $\nu^*\colon H^1(G,\C)\to  H^1(\pi, \C)$ be the induced homomorphism. 
Identifying $H^1(\pi,\C)$ with $H^1(X, \C)$, we may view the image of $\nu^*$ as a linear 
subspace of $H^1(X, \C)$. 

\begin{theorem}[\cite{PS-mrl}] 
\label{thm:ps}
With notation as above, the following implication holds:
\begin{equation*}
\label{eq:i1}
\im (\nu^*) \cap \bigg(\bigcup_{i\le q} \RR^i (X) \bigg) = \{ 0\} \imp
\dim \bigoplus_{i\le q}  \widehat{H}_{i} (X^{\nu}, \C) < \infty. 
\end{equation*}
\end{theorem}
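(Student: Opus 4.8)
The plan is to convert the finiteness assertion into a statement about an isolated point, transport that statement from the Alexander-type invariants to the characteristic varieties of $X$ via the support theorem, and finally linearize at the identity so as to meet the resonance hypothesis. Throughout, set $R=\C[G]$, let $\m\subset R$ be the augmentation ideal, which is the maximal ideal corresponding to the trivial character $1\in\Hom(G,\C^*)=\specm(R)$, and put $M=\bigoplus_{i\le q}H_i(X^\nu,\C)$, a finitely generated $R$-module. Because the sum is finite and each summand is completed at the same ideal $\m$, the space $\bigoplus_{i\le q}\widehat H_i(X^\nu,\C)$ is finite-dimensional if and only if $\widehat M$ is. By Proposition \ref{prop:isol}, this holds exactly when $1$ is an isolated point of $\supp(M)=\bigcup_{i\le q}\supp(H_i(X^\nu,\C))$, so it suffices to deduce this isolatedness from the resonance hypothesis.

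The first step is to move the support variety into the character torus of $\pi$. Since $\nu$ is onto, the induced map $\nu^*\colon\Hom(G,\C^*)\to\Hom(\pi,\C^*)$ is an injective homomorphism of complex algebraic groups, hence a closed immersion onto the closed subgroup $\im(\nu^*)$; in particular, a point is isolated in a closed subset $Z\subseteq\Hom(G,\C^*)$ if and only if its image is isolated in $\nu^*(Z)$. Taking $Z=\bigcup_{i\le q}\supp(H_i(X^\nu,\C))$ and applying Theorem \ref{thm:ps-plms}, which identifies $\nu^*(Z)$ with $\im(\nu^*)\cap\bigcup_{i\le q}\VV^i(X)$ while sending $1$ to $1$, I reduce the problem to showing that $1$ is isolated in $\im(\nu^*)\cap\bigcup_{i\le q}\VV^i(X)$.

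The final step is to pass to tangent cones at the identity. The exponential map $\exp_*\colon\Hom(\pi,\C)\to\Hom(\pi,\C^*)$ is a local analytic isomorphism taking $0$ to $1$, and it is natural with respect to $\nu$, so it carries the germ at $0$ of the linear subspace $\im(\nu^*)\subseteq H^1(X,\C)$ onto the germ at $1$ of the subgroup $\im(\nu^*)\subseteq\Hom(\pi,\C^*)$. Pulling the germ at $1$ of $\im(\nu^*)\cap\bigcup_{i\le q}\VV^i(X)$ back through $\exp_*$ and passing to tangent cones at $0$, I use that the tangent cone of a finite union is the union of the tangent cones, that the tangent cone of an intersection lies in the intersection of the tangent cones, that the linear space $\im(\nu^*)$ equals its own tangent cone, and the tangent cone inclusion $\TC_1(\VV^i(X))\subseteq\RR^i(X)$, to obtain
\[
\TC_0\Big(\exp_*^{-1}\big(\im(\nu^*)\cap\bigcup_{i\le q}\VV^i(X)\big)\Big)\ \subseteq\ \im(\nu^*)\cap\bigcup_{i\le q}\RR^i(X)\ =\ \{0\},
\]
the last equality being the hypothesis. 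An analytic germ whose tangent cone is the origin is zero-dimensional, hence a single point, so $1$ is isolated in $\im(\nu^*)\cap\bigcup_{i\le q}\VV^i(X)$; unwinding the two preceding reductions then yields $\dim_\C\bigoplus_{i\le q}\widehat H_i(X^\nu,\C)<\infty$.

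The main obstacle is the tangent cone inclusion $\TC_1(\VV^i(X))\subseteq\RR^i(X)$ invoked in the last step, which must hold for the cohomology resonance of an arbitrary finite-type space, with no formality or model hypothesis; this is the general tangent cone theorem for cohomology jump loci, and in the formal case treated in \cite{PS-mrl} one in fact has the stronger germ isomorphism $\VV^i(X)_{(1)}\cong\RR^i(X)_{(0)}$ supplied by Theorem \ref{thm:dp-ccm} applied to $A=H^{\hdot}(X,\C)$, which combined with Theorem \ref{thm:mpps} makes the inclusion transparent. The remaining points are routine bookkeeping: verifying that $\nu^*$ is a closed immersion so that isolatedness transfers in both directions, and that $\exp_*$ intertwines the cohomology-level and character-level versions of $\nu^*$ compatibly with the formation of tangent cones.
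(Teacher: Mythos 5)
Your argument is correct, but it does not follow the paper's route: Theorem \ref{thm:ps} is imported from \cite{PS-mrl} (where it is proved by quite different means, via the equivariant spectral sequence), and the only in-paper justification offered is the remark in Section \ref{sect:qp} that, under the \emph{additional} hypothesis that $X$ admits a $q$-finite $q$-model $A$ with positive weights, the implication follows from Theorem \ref{thm1} together with the inclusion $\RR^i(A)\subseteq\RR^i(X)$ of Theorem \ref{thm:mpps-bis}. Your first two reductions --- finiteness of the completion $\Leftrightarrow$ isolatedness of $1$ in $\bigcup_{i\le q}\supp(H_i(X^{\nu},\C))$ via Proposition \ref{prop:isol}, then transport to $\im(\nu^*)\cap\bigcup_{i\le q}\VV^i(X)$ via Theorem \ref{thm:ps-plms} --- coincide with the corresponding steps in the proof of Theorem \ref{thm1}. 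Where you diverge is the final step: instead of passing through a model you invoke the unconditional tangent cone inclusion $\TC_1(\VV^i(X))\subseteq\RR^i(X)$. That inclusion is indeed a theorem (Libgober's tangent cone theorem, quoted for instance in \cite{DPS}), but it is \emph{not} available from within this paper: combining Theorems \ref{thm:dp-ccm} and \ref{thm:mpps} only yields $\TC_1(\VV^i_r(X))=\TC_0(\RR^i_r(A))\subseteq\RR^i_r(X)$ when a $q$-finite $q$-model $A$ exists, which is precisely the hypothesis Theorem \ref{thm:ps} does not make, so an explicit citation of Libgober is required. Granting that one external input, your argument actually buys more than the paper's remark does --- a proof of the full statement with no positive-weight or model assumption --- and the remaining steps (closed immersion, behaviour of tangent cones under intersection and union, $\dim_1 = \dim\TC_1$) are sound; the only slip is in your closing paragraph, since \cite{PS-mrl} does not assume formality.
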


We now sharpen this result, under the assumption that $X$ 
has a $q$-finite $q$-model $A$, i.e., $\Omega^{\hdot}(X)$ 
has the same $q$-type as $A$. 

\begin{theorem}
\label{thm1} 
Suppose that the $q$-finite CW-complex $X$ has a $q$-finite 
$q$-model $A$. Then the following 
conditions are equivalent.
\begin{enumerate}
\item \label{q1}
The complex vector space $\bigoplus_{i\le q} \widehat{H}_{i} (X^{\nu}, \C)$ 
is finite-dimensional.
\item \label{q2} 
The point $0$ is an isolated point in  
the variety $\im (\nu^*) \cap \big(\bigcup_{i\le q} \RR^i (A)\big)$.
\end{enumerate}
\end{theorem}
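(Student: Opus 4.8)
The plan is to prove the equivalence by composing three separate equivalences, passing from the homological data on the cover $X^{\nu}$, through the characteristic varieties of $X$, to the resonance varieties of the model $A$. First I would set up the algebra. Let $R=\C[G]$, let $\m$ be its augmentation ideal, and recall that $\specm(R)=\Hom(G,\C^*)$ with $\m$ corresponding to the trivial character $1$. Bundle the Alexander-type invariants into the single finitely generated $R$-module $M=\bigoplus_{i\le q}H_i(X^{\nu},\C)$. Since $\m$-adic completion commutes with finite direct sums, $\widehat M=\bigoplus_{i\le q}\widehat H_i(X^{\nu},\C)$, while $\supp(M)=\bigcup_{i\le q}\supp(H_i(X^{\nu},\C))$. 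Applying Proposition~\ref{prop:isol} to $M$ then shows that condition~\eqref{q1} is equivalent to the statement that $1$ is isolated with respect to $\bigcup_{i\le q}\supp(H_i(X^{\nu},\C))$, viewed as a subvariety of $\Hom(G,\C^*)$.

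Next I would transport this across the monomorphism $\nu^*\colon\Hom(G,\C^*)\to\Hom(\pi,\C^*)$. Because $\nu$ is a surjection onto the abelian group $G$, the dual map $\nu^*$ is a closed immersion of diagonalizable groups, hence a homeomorphism onto its closed image, sending $1$ to $1$. Thus $1$ is isolated in $\bigcup_{i\le q}\supp(H_i(X^{\nu},\C))$ if and only if it is isolated in the image $\nu^*\big(\bigcup_{i\le q}\supp(H_i(X^{\nu},\C))\big)$; and since $\im(\nu^*)$ is closed, isolatedness in this set is unchanged whether measured inside $\im(\nu^*)$ or inside the ambient torus $\Hom(\pi,\C^*)=H^1(X,\C^*)$. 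By Theorem~\ref{thm:ps-plms}, that image is exactly $\im(\nu^*)\cap\big(\bigcup_{i\le q}\VV^i(X)\big)$. So \eqref{q1} becomes equivalent to: $1$ is isolated in $\im(\nu^*)\cap\big(\bigcup_{i\le q}\VV^i(X)\big)$.

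Finally I would pass to resonance by means of Theorem~\ref{thm:dp-ccm}, which provides an analytic germ isomorphism $\varphi\colon H^1(X,\C^*)_{(1)}\isom H^1(A)_{(0)}$, induced by $\exp_*$, carrying the germ of $\VV^i(X)$ at $1$ onto the germ of $\RR^i(A)$ at $0$ for each $i\le q$. Since a point is isolated in a set exactly when the germ of that set at the point is trivial, and germ isomorphisms preserve intersections of germs, it remains to check that $\varphi$ also sends the germ at $1$ of the subgroup $\im(\nu^*)\subset H^1(X,\C^*)$ onto the germ at $0$ of the linear subspace $\im(\nu^*)\subset H^1(A)$. This is where naturality of the exponential enters: functoriality of $\exp_*$ in the coefficient sequence $0\to\Z\to\C\to\C^*\to0$ yields a commutative square relating $\nu^*\colon\Hom(G,\C)\to\Hom(\pi,\C)$ and $\nu^*\colon\Hom(G,\C^*)\to\Hom(\pi,\C^*)$, so that $\exp_*$ maps the linear subspace $\im(\nu^*_{\C})$ onto a neighborhood of $1$ in the subgroup $\im(\nu^*_{\C^*})$, and $\varphi$ matches the two germs. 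Granting this, $\varphi$ carries the germ at $1$ of $\im(\nu^*)\cap\big(\bigcup_{i\le q}\VV^i(X)\big)$ onto the germ at $0$ of $\im(\nu^*)\cap\big(\bigcup_{i\le q}\RR^i(A)\big)$, whence $1$ is isolated in the former precisely when $0$ is isolated in the latter, which is condition~\eqref{q2}. Chaining the three equivalences gives \eqref{q1}$\Leftrightarrow$\eqref{q2}.

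The hard part will be exactly this last compatibility. Theorem~\ref{thm:dp-ccm} only guarantees that $\varphi$ matches the germs of the \emph{full} jump loci $\VV^i(X)$ and $\RR^i(A)$, so the linear-subspace constraint $\im(\nu^*)$ must be handled separately. The naturality argument above should close the gap, but it requires verifying that the germ isomorphism produced in the proof of Theorem~\ref{thm:dp-ccm} is genuinely a restriction of the natural transformation $\exp_*$, so that the commutative square applies to it rather than to some unrelated germ identification.
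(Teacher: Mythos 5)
Your proposal is correct and follows essentially the same three-step chain as the paper's proof: Proposition~\ref{prop:isol} applied to $M=\bigoplus_{i\le q}H_i(X^{\nu},\C)$, then Theorem~\ref{thm:ps-plms}, then Theorem~\ref{thm:dp-ccm}. The only difference is that you spell out the compatibility of the exponential germ isomorphism with the subgroup $\im(\nu^*)$ via naturality of $\exp_*$, a point the paper leaves implicit; your justification of it is the right one.
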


\begin{proof} 
By Proposition \ref{prop:isol}, the $\C$-vector space  
$\bigoplus_{i\le q} \widehat{H}_{i} (X^{\nu}, \C)$ is finite-dimensional 
if and only if the identity character $1\in \Hom (G, \C^{*})$ is 
an isolated point in the variety $\bigcup_{i\le q} \supp (H_i(X^{\nu}, \C))$.

By Theorem \ref{thm:ps-plms}, this variety may be identified with
the intersection of the algebraic subgroup 
$\im \big(\nu^*\colon \Hom(G,\C^*) \to \Hom(\pi,\C^*)\big)$ with  
the corresponding union of characteristic varieties, 
$\bigcup_{i\le q} \VV^i(X)$. 

Finally, by Theorem \ref{thm:dp-ccm}, the 
germ at $1$ of the above intersection may be 
identified with the germ at $0$ of the trace on 
the linear subspace 
$\im \big(\nu^* \colon H^1(G,\C)\to  H^1(\pi, \C)\big)$ 
of the corresponding union of resonance varieties,
$\bigcup_{i\le q} \RR^i (A)$. 

Putting these facts together completes the proof.
\end{proof}

Taking $\nu=\ab$ in the above result proves Theorem \ref{thm:intro} 
from the Introduction.

\section{Positive weights and formal spaces}
\label{sect:qp}

Let $A$ be a rationally defined \cdga.  We say that 
$A$ has {\em positive weights}\/ if $A^i=\bigoplus_{j\in \Z}
A^i_j$ for each $i\ge 0$, and, moreover, these
vector space decompositions are compatible
with the $\cdga$ structure and satisfy the condition
$A^1_j=0$, for all $j\le 0$. As we shall see in Section \ref{sec=gmodels}, 
the Gysin models of a connected quasi-projective manifold 
have positive weights.

The existence of positive weights on a $\cdga$ model $A$ for 
a space $X$  leads to a strong connection between 
the resonance varieties of $A$ and $X$. 

\begin{theorem}[\cite{MPPS}]
\label{thm:mpps-bis}
Assume  $X$ is $q$-finite, $\Omega^{\hdot}(X)$ has the same $q$-type as a 
$q$-finite $\cdga$ A with positive weights, and the identification \eqref{eq=ident} 
preserves $\Q$-structures. Then $\RR^i_r(A)\subseteq \RR^i_r(X)$, 
for all $i\le q$ and $r\ge 0$.
\end{theorem}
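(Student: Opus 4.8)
The plan is to deduce the inclusion from the tangent cone estimate of Theorem~\ref{thm:mpps} together with a homogeneity statement supplied by the positive weights. Recall that $\RR^i_r(X)$, being the resonance variety of the cohomology ring $(H^{\hdot}(X,\C),d=0)$, is a homogeneous subvariety of $H^1(X,\C)$, and that Theorem~\ref{thm:mpps} yields $\TC_0\big(\RR^i_r(A)\big)\subseteq \RR^i_r(X)$ once we identify $H^1(A)$ with $H^1(X,\C)$ via \eqref{eq=ident}. (The hypothesis that this identification respects $\Q$-structures is precisely what makes the weight grading of $H^1(A)$ visible on $H^1(X,\C)$, so that the two resonance varieties may be compared inside the same space.) Consequently, it suffices to show that $\RR^i_r(A)$ is itself homogeneous, for then $\RR^i_r(A)=\TC_0(\RR^i_r(A))\subseteq \RR^i_r(X)$.

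First I would extract the rescaling symmetry coming from the weights. For $t\in\C^*$, define $\phi_t\colon A\to A$ by $\phi_t(a)=t^j a$ for $a\in A^i_j$. Compatibility of the weight decomposition with the product and the differential shows at once that $\phi_t$ is a $\cdga$ automorphism, hence it induces an automorphism $\phi_t^*$ of $H^1(A)$ that scales the summand $H^1(A)_j$ by $t^j$. Using $d\phi_t=\phi_t d$ and the multiplicativity of $\phi_t$, one checks that for any cocycle $\omega\in Z^1(A)$ the maps intertwine,
\[
\phi_t\circ d_\omega=d_{\phi_t(\omega)}\circ\phi_t,
\]
so that $\phi_t\colon (A,d_\omega)\isom (A,d_{\phi_t(\omega)})$ is an isomorphism of cochain complexes. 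Passing to cohomology gives $\dim_\C H^i(A,d_\omega)=\dim_\C H^i(A,d_{\phi_t(\omega)})$ for all $i$, which is to say that each $\RR^i_r(A)$ is invariant under the $\C^*$-action $\phi_t^*$.

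The theorem is thus reduced to promoting $\phi_t^*$-invariance to genuine homogeneity, and this is where the main difficulty lies. When $H^1(A)$ is concentrated in a single weight, $\phi_t^*$ is ordinary scalar multiplication, $\RR^i_r(A)$ is a cone, and we are done; but in general $H^1(A)$ carries several positive weights (for a smooth quasi-projective $X$ the weights $1$ and $2$ both occur), and $\phi_t^*$ is then only a \emph{weighted} $\C^*$-action, under which $\RR^i_r(A)$ is weighted-homogeneous but not a priori a cone. To bridge this gap I would replace the variety statement by the equivalent pointwise estimate
\[
\dim_\C H^i(A,d_\omega)\le \dim_\C H^i\big(H^{\hdot}(A),\omega\cdot\big),\qquad \omega\in H^1(A),
\]
and attack it through the spectral sequence of the weight filtration $F^pA=\bigoplus_{w\ge p}A_w$ on $(A,d_\omega)$. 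Since $d$ preserves weight while each weight component of $\omega$ strictly raises it, this spectral sequence has $E_1=H^{\hdot}(A)$, its first nonzero differential past $E_1$ is cup product with the lowest-weight part of $\omega$, and its later differentials encode cup products with the remaining parts together with Massey-type corrections arising from the possible non-formality of $A$. The crux, and the step I expect to be hardest, is to show that these higher, Massey-type differentials can only shrink the abutment relative to the Aomoto complex of the cohomology ring, so that the displayed inequality holds with the full class $\omega$ and not merely with its lowest-weight component. Once this bound is established, the inclusion $\RR^i_r(A)\subseteq \RR^i_r(X)$, and in particular the homogeneity of $\RR^i_r(A)$, follows at once.
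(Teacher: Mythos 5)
You should first note that the paper contains no proof of Theorem~\ref{thm:mpps-bis} to compare against: it is quoted from \cite{MPPS}. Judged on its own terms, your proposal correctly sets up the first ingredient --- the weight automorphisms $\phi_t$, the intertwining $\phi_t\circ d_\omega=d_{\phi_t(\omega)}\circ\phi_t$, and the resulting invariance of $\RR^i_r(A)$ under the weighted $\C^*$-action (this is exactly the mechanism the paper itself uses in the proof of Corollary~\ref{cor:fd}). But the core of the theorem is left unproved, and you say so yourself.

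The gap is concrete. Your first reduction, to showing that $\RR^i_r(A)$ is a cone so that Theorem~\ref{thm:mpps} applies, asks for more than the theorem asserts: a subvariety of a homogeneous variety need not be homogeneous, and when $H^1(A)$ carries two distinct weights (as it does for Gysin models, with weights $1$ and $2$) weighted invariance gives no control of the honest tangent cone beyond the lowest-weight directions. Your fallback, the weight-filtration spectral sequence, does not deliver the pointwise inequality $\dim_\C H^i(A,d_\omega)\le\dim_\C H^i(H^{\hdot}(A),\omega\cdot)$ either: since multiplication by the weight-$j$ component $\omega_j$ raises filtration by exactly $j\ge 1$, the differential on the $E_1$ page is cup product with the lowest-weight part $\omega_1$ alone, and the standard estimate $\dim H^i(A,d_\omega)\le\dim E_2^i$ bounds the left-hand side by the Aomoto--Betti number of $\omega_1$, not of $\omega$. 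No page of this spectral sequence computes $H^i(H^{\hdot}(A),\omega\cdot)$ with the full class, and you have produced no map of complexes relating $(A,d_\omega)$ to $(H^{\hdot}(A),\omega\cdot)$; the assertion that the higher, Massey-type differentials ``can only shrink the abutment relative to'' the latter is essentially the theorem restated, not proved. Example~\ref{ex:solv} (where $\RR^1(A)=\{0,1\}$) shows the positive-weight hypothesis must enter in an essential way, so this missing step cannot be waved through by general spectral-sequence yoga. As written, your argument establishes the theorem only when $H^1(A)$ is concentrated in a single weight, and for the general case you would need to consult the actual argument in \cite{MPPS}.
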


Under the above (more restrictive) assumptions, Theorem \ref{thm:ps}
also follows from Theorems \ref{thm1} and \ref{thm:mpps-bis}. 
The positive weight property also enters into the following result,
applicable to Gysin models of quasi-projective manifolds.

\begin{corollary}
\label{cor:fd} 
Under the assumptions from the above theorem, the $\C$-vector space 
$\bigoplus_{i\le q} \widehat{H}_{i} (X^{\ab}, \C)$ is finite-dimensional 
if and only if $\bigcup_{i\le q} \RR^i (A)= \{ 0\}$.
\end{corollary}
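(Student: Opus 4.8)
The plan is to derive Corollary \ref{cor:fd} directly from Theorem \ref{thm1} specialized to $\nu=\ab$, combined with the resonance inclusions available in the positive-weights setting. Setting $\nu=\ab$, the map $\nu^*$ is an isomorphism onto $H^1(X,\C)$, so $\im(\nu^*)=H^1(X,\C)$, and condition \eqref{q2} of Theorem \ref{thm1} becomes simply the statement that $0$ is an isolated point of $\bigcup_{i\le q}\RR^i(A)$. Thus the task reduces to showing that, under the positive-weights hypotheses, ``$0$ is isolated in $\bigcup_{i\le q}\RR^i(A)$'' is equivalent to ``$\bigcup_{i\le q}\RR^i(A)=\{0\}$''.

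The key structural input is that, by Theorem \ref{thm:mpps-bis}, we have $\RR^i_r(A)\subseteq\RR^i_r(X)$ for all $i\le q$, and the varieties $\RR^i_r(X)$ are \emph{homogeneous} subsets of $H^1(X,\C)$ (being resonance varieties of a \cdga\ with zero differential, as noted in Section \ref{sect:res}). First I would observe that each $\RR^i_1(A)$ contains $0$, since $d_0=d$ and $H^i(A,d)\ne 0$ for the relevant degrees (in particular $H^0(A)=\C$ gives $0\in\RR^0$; more to the point, $0\in\RR^i(A)$ whenever $H^i(A)\ne0$, and in any case the union always contains $0$). Hence $\bigcup_{i\le q}\RR^i(A)$ is a nonempty subvariety through the origin, and the nontrivial direction is to rule out positive-dimensional components accumulating at $0$.

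The heart of the argument is the following dichotomy. Suppose $0$ is isolated in $V:=\bigcup_{i\le q}\RR^i(A)$; I want to conclude $V=\{0\}$. Here I would exploit homogeneity \emph{indirectly}: although $V$ itself need not be homogeneous (the differential of $A$ is generally nonzero), the containing variety $W:=\bigcup_{i\le q}\RR^i(X)$ \emph{is} homogeneous. The positive-weights hypothesis should be used to upgrade the tangent-cone containment of Theorem \ref{thm:mpps} into genuine equality of germs at $0$ between $V$ and $W$, or at least to show that the germ of $V$ at $0$ already determines whether $W=\{0\}$. Concretely, the cleanest route is: if $V=\{0\}$ then trivially $0$ is isolated; conversely, if $V\ne\{0\}$, then $W\supseteq V$ is a homogeneous variety that is strictly larger than $\{0\}$, hence contains a line through the origin, and I would argue via Theorem \ref{thm:mpps-bis} (which gives $\RR^i_r(A)\subseteq\RR^i_r(X)$ but crucially not the reverse) together with the positive-weights symmetry that $V$ must then also contain a positive-dimensional piece through $0$, contradicting isolation. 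The cleanest formulation is that under positive weights the resonance varieties $\RR^i_r(A)$ are themselves homogeneous, so that $0$ isolated forces $V=\{0\}$ outright.

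The main obstacle will be pinning down precisely why positive weights force homogeneity of $\RR^i_r(A)$ (or equivalently why the germ $V_{(0)}$ being a point forces $V$ to be globally a point). The heuristic is that a positive-weight grading induces a $\C^*$-action on $H^1(A)$ compatible with the Aomoto differentials $d_\omega$, so that $\omega\in\RR^i_r(A)$ implies $t\cdot\omega\in\RR^i_r(A)$ for all $t\in\C^*$; this $\C^*$-invariance is exactly homogeneity, and it immediately converts ``$0$ isolated'' into ``$V=\{0\}$.'' I would therefore spend the bulk of the proof verifying that the weight decomposition with $A^1_j=0$ for $j\le0$ yields such a scaling action intertwining $d_\omega$ with $d_{t\omega}$ up to isomorphism of cochain complexes, so that $\dim_\C H^i(A,d_\omega)=\dim_\C H^i(A,d_{t\omega})$; everything else is then a formal consequence of Theorem \ref{thm1} with $\nu=\ab$.
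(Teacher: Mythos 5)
Your proposal follows the paper's own proof: apply Theorem \ref{thm1} with $\nu=\ab$ (so that $\im(\nu^*)=H^1(X,\C)$), then use the $\C^*$-action coming from the positive-weight decomposition to show that any nonzero point of $\bigcup_{i\le q}\RR^i(A)$ lies on a positive-dimensional orbit whose closure contains $0$, contradicting isolation. One small correction to how you phrase the key step: the action induced by the grading sends $\omega=\sum_j\omega_j$ to $\sum_j t^j\omega_j$ rather than to the scalar multiple $t\omega$, so it intertwines $d_\omega$ with $d_{\phi_t(\omega)}$ (not $d_{t\omega}$) and makes the resonance varieties weighted-homogeneous rather than literal cones; since all weights on $A^1$ are positive, these orbits are still positive-dimensional with $0$ in their closure, so the argument goes through unchanged.
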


\begin{proof}
By Theorem \ref{thm1}, the dimension of 
$\bigoplus_{i\le q} \widehat{H}_{i} (X^{\ab}, \C)$ is finite 
if and only if $0$ is an isolated point in $\bigcup_{i\le q} \RR^i (A)$. Note that
the $\C^*$-action on $A^1$ associated to the positive weight decomposition 
leaves both $H^1(A)$ and the resonance varieties $\RR^i_r (A)$ invariant.
Since plainly the orbit $\C^* \cdot \alpha$ is positive-dimensional for 
$0\ne \alpha \in A^1$ and $0$ belongs to the closure of this orbit, 
our claim follows.
\end{proof}

A connected space $X$ is said to be {\em $q$-formal}\/ if Sullivan's 
model $\Omega^{\hdot}(X)$ has the same $q$-type as  
the cohomology algebra $H^{\hdot}(X,\C)$ endowed with the 
$0$ differential.  By \cite{DGMS}, 
compact K\"{a}hler manifolds are $\infty$-formal. 
For a detailed discussion of formality (especially 
$1$-formality) in our context, we refer to \cite{PS-formal}. 

Notice that, if $X$ is $q$-finite and $q$-formal, 
we may take $A=(H^{\hdot}(X,\C), d=0)$ as an 
appropriate $\cdga$ model  in Theorem \ref{thm:dp-ccm}. 

\begin{theorem}[\cite{DP-ann}] 
\label{thm:dp}
Suppose $X$ is $1$-finite and $1$-formal. Then the following holds: 
\begin{equation*}
\label{eq:i2}
\dim  \bigoplus_{i\le 1}  \widehat{H}_{i} (X^{\ab}, \C)< \infty  \same
\bigcup_{i\le 1} \RR^i (X)= \{ 0\}.
\end{equation*}
\end{theorem}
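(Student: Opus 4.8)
The plan is to obtain Theorem~\ref{thm:dp} as the specialization of the sharpened result Theorem~\ref{thm1} to the formal case, where the vanishing of the differential turns every resonance variety into a cone and thereby collapses the condition ``$0$ is isolated'' into ``the variety is trivial''.

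First I would fix the model. Since $X$ is $1$-finite and $1$-formal, Sullivan's algebra $\Omega^{\hdot}(X)$ has the same $1$-type as $A=(H^{\hdot}(X,\C),d=0)$, and because $X$ is connected with finite $1$-skeleton we have $A^0=\C$ and $\dim_{\C}A^1=\dim_{\C}H^1(X,\C)<\infty$. Thus $A$ is a $1$-finite $1$-model for $X$, and by the very definition of the resonance varieties $\RR^i(X)$ as those of $(H^{\hdot}(X,\C),d=0)$, we have $\RR^i(A)=\RR^i(X)$ for all $i$.

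Next I would apply Theorem~\ref{thm1} with $q=1$ and $\nu=\ab$. Here the induced map $\ab^*\colon H^1(\pi_{\ab},\C)\to H^1(\pi,\C)$ is an isomorphism, so $\im(\nu^*)=H^1(X,\C)=H^1(A)$ and the intersection occurring in the second condition of that theorem becomes the whole union $\bigcup_{i\le 1}\RR^i(A)=\bigcup_{i\le 1}\RR^i(X)$. Theorem~\ref{thm1} then delivers the equivalence
\[
\dim_{\C}\bigoplus_{i\le 1}\widehat H_i(X^{\ab},\C)<\infty
\same
0\ \text{is isolated in}\ \bigcup_{i\le 1}\RR^i(X).
\]

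The decisive step is to determine when $0$ is isolated in this union. As the differential of $A$ vanishes, each $\RR^i(X)$ is a homogeneous subset of $H^1(X,\C)$, and hence so is their finite union. For such a cone, any nonzero point $v$ drags the entire punctured line $\C^*\cdot v$ into the variety, exhibiting $0$ as a limit of nonzero points; consequently $0$ is isolated exactly when the union reduces to $\{0\}$. Feeding this into the equivalence above yields $\dim_{\C}\bigoplus_{i\le 1}\widehat H_i(X^{\ab},\C)<\infty$ if and only if $\bigcup_{i\le 1}\RR^i(X)=\{0\}$, which is Theorem~\ref{thm:dp}. The homogeneity of the resonance varieties, available here precisely because of formality and absent in the general situation of Theorem~\ref{thm1}, is the crux of the argument; everything else is a direct transcription of the established machinery.
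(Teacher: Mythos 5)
Your proposal is correct and follows essentially the same route the paper takes: the paper states Theorem~\ref{thm:dp} as a citation from \cite{DP-ann}, but its proof of the generalization (Theorem~\ref{thm:formal}, via Corollary~\ref{cor:fd}) is exactly your argument --- take $A=(H^{\hdot}(X,\C),d=0)$ as a $q$-finite $q$-model, apply Theorem~\ref{thm1}, and use the homogeneity of the resonance varieties (phrased there via the $\C^*$-action coming from positive weights) to upgrade ``$0$ is isolated'' to ``the variety equals $\{0\}$''. Your specialization to $q=1$, $\nu=\ab$, with the observation that $\im(\ab^*)$ is all of $H^1(X,\C)$, recovers Theorem~\ref{thm:dp} exactly as intended.
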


We may now generalize this result, as follows. 

\begin{theorem}
\label{thm:formal} 
Let $X$ be a $q$-formal CW-complex with finite $q$-skeleton. Then  
\begin{equation*}
\label{eq:equiv}
\dim \bigoplus_{i\le q}  \widehat{H}_{i} (X^{\nu}, \C) < \infty \same
\im (\nu^*) \cap \Big(\bigcup_{i\le q} \RR^i (X) \Big) = \{ 0\} ,
\end{equation*}
for any abelian Galois cover $X^{\nu}$ of $X$. 
\end{theorem}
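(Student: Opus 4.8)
The plan is to deduce this from Theorem~\ref{thm1} by feeding it the canonical model that $q$-formality provides, and then to use the homogeneity of resonance in the formal case to sharpen the ``isolated point'' conclusion into the stated equality.

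First I would construct the model. By definition of $q$-formality, Sullivan's model $\Omega^{\hdot}(X)$ has the same $q$-type as $A:=(H^{\hdot}(X,\C),d=0)$, so $A$ is a $q$-model for $X$. Since $X$ is connected with finite $q$-skeleton, we have $A^0=\C$ and $\dim_\C A^i=b_i(X)<\infty$ for all $i\le q$, so $A$ is $q$-finite. Thus $X$ has a $q$-finite $q$-model and Theorem~\ref{thm1} applies to it. Moreover, for this $A$ the twisted differential $d_\omega(\alpha)=d\alpha+\omega\cdot\alpha$ is just multiplication by $\omega$, so the resonance varieties of $A$ coincide by definition with those of $X$: $\RR^i(A)=\RR^i(X)$ for every $i\le q$. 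Hence Theorem~\ref{thm1} gives that $\bigoplus_{i\le q}\widehat{H}_i(X^\nu,\C)$ is finite-dimensional if and only if $0$ is an isolated point of the variety $V:=\im(\nu^*)\cap\big(\bigcup_{i\le q}\RR^i(X)\big)$.

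It remains to replace ``$0$ is isolated in $V$'' by ``$V=\{0\}$,'' and this is the only step where formality is doing genuine work. Because $d=0$, each $\RR^i(X)$ is a homogeneous subset of $H^1(X,\C)$, i.e.\ invariant under the scaling action of $\C^*$; their union is therefore homogeneous, and since $\im(\nu^*)$ is a linear subspace, the intersection $V$ is again $\C^*$-invariant and Zariski closed, and it contains $0$. For such a set, any nonzero $v\in V$ forces the entire line $\C\cdot v$ to lie in $V$ (the scalar multiples lie in $V$ by invariance, and $0$ lies in $V$ by closedness), so $0$ fails to be isolated; conversely $V=\{0\}$ makes $0$ trivially isolated. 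Thus $0$ is isolated in $V$ exactly when $V=\{0\}$, which is precisely the right-hand side of the claimed equivalence, completing the argument. I do not expect a real obstacle here, but it is worth flagging that homogeneity---hence formality---is essential: in the general setting of Theorem~\ref{thm1} this sharpening fails, as Example~\ref{ex:solv} shows, where $\RR^1(A)=\{0,1\}$ has $0$ isolated without being equal to $\{0\}$.
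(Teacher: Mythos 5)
Your proof is correct and follows essentially the same route as the paper: take $A=(H^{\hdot}(X,\C),d=0)$ as a $q$-finite $q$-model, apply Theorem~\ref{thm1}, and use homogeneity of $\RR^i(X)$ (together with linearity of $\im(\nu^*)$) to upgrade ``$0$ is isolated'' to ``the intersection equals $\{0\}$.'' The paper phrases the last step via the $\C^*$-action coming from the obvious positive-weight decomposition, as in Corollary~\ref{cor:fd}, but that is the same homogeneity argument you give directly.
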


\begin{proof}
By assumption, the cohomology algebra $A=H^{\hdot}(X,\C)$ endowed 
with the $0$ differential is a $q$-finite $q$-model for $X$.   Clearly, 
this $\cdga$ has positive weights: simply set $A^i_j=A^i$ if $j=i$, 
and zero otherwise. Furthermore, the $\C^*$-action on $A^1$ 
associated to this positive weight decomposition is just  
$\C^*$-multiplication. The desired conclusion follows in 
the same way as in Corollary \ref{cor:fd}, using the fact that 
each resonance variety $\RR^i(X)$ is a homogeneous variety. 
\end{proof}

\begin{exm}
\label{ex:heis-bis}
Let $X$ be the Heisenberg manifold from Example \ref{ex:heis}. 
This space is not $1$-formal, and $\RR^1(X)=\C^2$; 
thus, neither Theorem \ref{thm:dp} nor Theorem \ref{thm:ps} apply 
in this case.

On the other hand, as we saw previously, $X$ admits a finite  
model $(A,d)$ for which  $\RR^1(A)=\{0\}$.  
Thus, we may apply  Theorem \ref{thm1}   
to conclude that $\widehat{H}_1(X^{\ab}, \C)$ is finite-dimensional.   
(In fact, direct computation shows that $H_1(X^{\ab}, \C)=\C$.)
\end{exm}

\section{Gysin models}
\label{sec=gmodels}

Let $X$ be a connected quasi-projective manifold. Choose a smooth 
compactification, $X= \overline{X}\setminus D$, where $D=\bigcup_{j\in J} D_j$ 
is a finite union of smooth divisors with normal crossings.
There is then an associated rational $\cdga$, 
$(A^{\hdot}, d)=A^{\hdot}(\overline{X},D)$, called the {\em Gysin model} 
of the compactification, constructed as follows.

As a vector space, $A^k= \bigoplus_{p+l=k} A^{p,l}$, with 
\begin{equation}
\label{eq:apl}
A^{p,l}= \bigoplus_{\mid S \mid =l} H^p \Big(\bigcap_{i\in S} D_i, \Q\Big)(-l),
\end{equation}
where $S$ runs through the $l$-element subsets of $J$ 
and $(-l)$ denotes the Tate twist. The multiplication in $A$ is induced by 
the cup--product, and has the property that 
$A^{p,l} \cdot A^{p',l'} \subseteq A^{p+p',l+l'}$. The differential, 
$d\colon A^{p,l} \to A^{p+2, l-1}$, is defined by using the various 
Gysin maps coming from intersections of divisors. 

Morgan proved in \cite{M} that $\Omega^{\hdot}_{\Q} (X)$ 
has the same $\infty$-type as $(A^{\hdot}, d)$,
and thus $\Omega^{\hdot} (X)$ has the same $\infty$-type 
as $(A^{\hdot}, d) \otimes \C$; moreover, 
the induced homology isomorphisms preserve $\Q$-structures. 
The weight of $A^{p,l}$ is  by definition $p+2l$, and this clearly 
gives a positive-weight decomposition of $(A^{\hdot}, d)$. 

Note that $(H^{\hdot}(\overline{X}, \C), d=0)$ is a sub-$\cdga$ of 
$A^{\hdot}(\overline{X},D)\otimes \C$, much simpler than the whole Gysin model.
Unfortunately, this subalgebra does not give a model for $X$, in general.

We will need a more detailed description of $(A^{\hdot}, d)= A^{\hdot}(\overline{X},D)$
in low degrees.  Omitting the  coefficients for cohomology (they will be 
assumed to be either $\Q$ or $\C$ for the rest of this section), 
we have:
\begin{align}
A^0&= A^{0,0}=H^0(\overline{X})
\\ \notag
A^1&= A^{1,0}\oplus A^{0,1} =
H^1(\overline{X}) \oplus \bigoplus_{j \in J}H^0(D_j)
\\ \notag
A^2&= A^{2,0}\oplus A^{1,1} \oplus A^{0,2}=H^2(\overline{X}) 
\oplus \bigoplus_{j\in J}H^1(D_j)  \oplus 
\bigoplus_{\{ j,j' \}\subset J}H^0(D_j\cap D_{j'}),
\end{align}
with differential $d\colon A^0 \to A^1$ the zero map, 
and differential $d\colon A^1 \to A^2$ given by
\begin{equation}
\label{eq:diff}
d(\eta, (b_j)_{j \in J})=\Big(\sum_{j \in J}\iota_{j!}(b_j),0,0\Big)
\end{equation} 
for $\eta \in  H^1(\overline{X})$ and $b_j \in H^0(D_j)$. 
Here $\iota_j\colon D_j \to \overline{X}$ denotes the inclusion and 
$\iota_{j!}\colon H^0(D_j) \to H^2(\overline{X})$ the corresponding 
Gysin map. Note that $\iota_{j!} (1)\in H^2(\overline{X})$ is the Poincar\'{e} dual 
of the fundamental class $[D_j]$.
In analytic terms, $\iota_{j!} (1)= c_1(\OO_{\overline{X}}(D_j))$; 
see e.g.~\cite[p.~141]{GH}.

 To conclude 
this section, we provide the following cohomological criterion, 
as a warm-up exercise with Gysin models.

\begin{lemma}
\label{lem=iotaiso}
Let $\iota\colon X \to \overline{X}$ be the inclusion map, and 
assume that each divisor $D_j \subset \overline{X}$ 
is irreducible. Then the induced map 
$\iota^*\colon H^1(\overline{X}) \to H^1({X})$ is an isomorphism
if and only if the classes $\{ \iota_{j!} (1) \}_{j\in J}$ are linearly independent.
\end{lemma}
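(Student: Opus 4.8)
The plan is to read off everything from the explicit low-degree description of the Gysin model $A=A^{\hdot}(\overline{X},D)$, together with Morgan's theorem \cite{M} that $A\otimes\C$ is a model for $X$, so that $H^1(A)\cong H^1(X,\C)$. First I would compute $H^1(A)$. Since $d\colon A^0\to A^1$ vanishes, we have $H^1(A)=\ker(d\colon A^1\to A^2)$. Because each $D_j$ is irreducible, hence connected, we have $H^0(D_j)=\C$, so an element of $A^1=H^1(\overline{X})\oplus\bigoplus_j H^0(D_j)$ may be written as $(\eta,(\beta_j)_{j\in J})$ with $\eta\in H^1(\overline{X})$ and $\beta_j\in\C$. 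By formula \eqref{eq:diff}, $d(\eta,(\beta_j))=\big(\sum_j\beta_j\,\iota_{j!}(1),0,0\big)$, which does not involve $\eta$. Hence
\[
H^1(A)=H^1(\overline{X})\oplus\ker\phi,\qquad
\phi\colon\C^{\abs{J}}\to H^2(\overline{X}),\ (\beta_j)\mapsto\textstyle\sum_j\beta_j\,\iota_{j!}(1),
\]
and in particular $\dim_{\C}H^1(A)=\dim_{\C}H^1(\overline{X})+\dim_{\C}\ker\phi$.

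The crux is to identify the restriction map $\iota^*$ with the inclusion of the first summand $A^{1,0}=H^1(\overline{X})$ into $H^1(A)\cong H^1(X)$. This is where I expect the main work to lie, since it is a naturality statement about the Gysin construction rather than a formal computation. The Gysin model of $\overline{X}$ itself (taking $D=\emptyset$) is just $(H^{\hdot}(\overline{X}),0)$, and the open inclusion $\iota\colon X\hookrightarrow\overline{X}$ induces a $\cdga$ map of Gysin models which on the subalgebra $A^{\bullet,0}$ is the identity. Passing to cohomology, this realizes $\iota^*$ as the composite $H^1(\overline{X})=A^{1,0}\hookrightarrow\ker d=H^1(A)\cong H^1(X)$. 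Alternatively, one can invoke the mixed Hodge structure: the summand $A^{1,0}$ is precisely the weight-$1$ piece of $H^1(A)$, which matches $W_1H^1(X)=\im\iota^*$. In either case $\iota^*$ is thereby identified with the inclusion of a direct summand, so it is automatically injective.

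Granting this identification, the conclusion is immediate. The map $\iota^*$ is an isomorphism if and only if it is surjective, i.e.\ if and only if $\dim_{\C}H^1(X)=\dim_{\C}H^1(\overline{X})$. By the dimension count above, this holds precisely when $\dim_{\C}\ker\phi=0$, that is, when $\phi$ is injective; and $\phi$ is injective exactly when the classes $\{\iota_{j!}(1)\}_{j\in J}$ are linearly independent in $H^2(\overline{X})$. The only ingredient that is not pure linear algebra applied to the differential \eqref{eq:diff} is the compatibility of the model isomorphism $H^1(A)\cong H^1(X)$ with $\iota^*$, so I would spend most of the write-up justifying that compatibility cleanly.
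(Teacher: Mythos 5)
Your proof is correct and takes essentially the same route as the paper: both identify $\iota^*$ with the map induced on $H^1$ by the $\cdga$ inclusion $(H^{\hdot}(\overline{X}), d=0) \inj A^{\hdot}(\overline{X},D)$ via functoriality of Gysin models, and then read off from formula \eqref{eq:diff} that this map is an isomorphism exactly when $d$ is injective on $A^{0,1}$, i.e., when the classes $\iota_{j!}(1)$ are linearly independent. Your write-up merely makes explicit the direct-sum decomposition $H^1(A)=H^1(\overline{X})\oplus\ker\phi$ and the resulting dimension count, which the paper leaves as an ``inspection'' of the differential.
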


\begin{proof}
By functoriality of Gysin models \cite{M}, the map $\iota^*$ 
may be identified with the homomorphism induced on $H^1$ by the inclusion 
$(H^{\hdot}(\overline{X}), d=0) \inj A^{\hdot}(\overline{X},D)$.
By inspecting the definition \eqref{eq:diff} of the differential 
$d\colon A^1\to A^2$, we infer that the induced map
on $H^1$ is an isomorphism if and only if the restriction 
of $d$ to $A^{0,1}$ is injective.
\end{proof}
 
\section{Intersection forms and resonance}
\label{sect:non-proper}

Let $\overline{X}$ be a connected projective manifold of dimension $n\ge 2$, 
and let $D$ be a union of smooth divisors in $\overline{X}$. 
Let $\{D_j\}_{j \in J}$ be the irreducible components of $D$. 
We may define the intersection multiplicity $D_i \cdot D_j$ 
of two such components by the usual formula, 
\begin{equation}
\label{eq:ddot}
D_i \cdot D_j=\langle \eta_i \eta _j \al ^{n-2}, [\overline{X}] \rangle,
\end{equation}
where $\alpha$ is a K\"ahler form on $\overline{X}$ and 
$\eta_i\in H^2(\overline{X},\Z)$ is the cohomology class dual to  
$[D_i] \in H_{2n-2}(\overline{X},\Z)$. Alternatively,
using the projection formula from \cite[p.~11]{BPV}, 
we have that 
\begin{equation}
\label{eq:ddot-bis}
D_i \cdot D_j=\langle \iota_j^*(\eta_i \al ^{n-2}) , [D_j]\rangle,
\end{equation}
where $\iota_j\colon D_j \to \overline{X}$ is the inclusion.
When $n=2$ the choice of $\al$ is not necessary, since 
$D_i \cdot D_j$ coincides then with the usual intersection 
number of curves on a smooth surface.

Now let $(A^{\hdot}, d)=A^{\hdot}(\overline{X},D)$ be the 
Gysin model (over $\C$) associated to the divisor 
$D\subset \overline{X}$, as described in 
low degrees in the previous section. Also let 
$X=\overline{X}\setminus D$, and 
let $\iota\colon X \to \overline{X}$ be the inclusion map.
The decomposition of $D$ into  
connected components leads to a partition $J=J_1 \cup \cdots \cup J_m$.   
It follows from \eqref{eq:ddot} that the intersection matrix of $D$ splits
into blocks $I_1,\dots, I_m$, given by the intersection matrices of 
the divisors $D^k= \bigcup_{j\in J_k} D_j$.

\begin{lemma}
\label{lem=iotabis}
Suppose each of the intersection matrices $I_1,\dots, I_m$ is invertible. 
Then the map $\iota^*\colon H^1(\overline{X},\C) \to H^1(X,\C)$ is 
an isomorphism.
\end{lemma}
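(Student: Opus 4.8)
The plan is to reduce the isomorphism statement to a linear-independence statement about the duals of the divisor classes, and then to extract that independence from the invertibility of the block-diagonal intersection matrix.

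First I would appeal to Lemma \ref{lem=iotaiso}. Since the $D_j$ are exactly the irreducible components of $D$, that lemma asserts that $\iota^*\colon H^1(\overline{X},\C)\to H^1(X,\C)$ is an isomorphism precisely when the Gysin classes $\{\iota_{j!}(1)\}_{j\in J}$ are linearly independent in $H^2(\overline{X},\C)$. As recorded in the previous section, $\iota_{j!}(1)$ is the Poincar\'e dual $\eta_j$ of $[D_j]$, so it suffices to prove that the classes $\{\eta_j\}_{j\in J}$ are linearly independent.

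The key point is to view the full intersection matrix $I=(D_i\cdot D_j)_{i,j\in J}$ as the Gram matrix of the $\eta_j$ with respect to the symmetric bilinear form $b(a,a')=\langle a\,a'\,\al^{n-2},[\overline{X}]\rangle$ on $H^2(\overline{X},\C)$; by \eqref{eq:ddot} one has $b(\eta_i,\eta_j)=D_i\cdot D_j$. Because $D_i$ and $D_j$ are disjoint whenever they belong to distinct connected components of $D$, we get $D_i\cdot D_j=0$ whenever $i\in J_k$ and $j\in J_{k'}$ with $k\ne k'$, so $I$ is block-diagonal with blocks $I_1,\dots,I_m$; by hypothesis each block, and hence $I$ itself, is invertible. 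Finally, any relation $\sum_{j\in J}c_j\eta_j=0$, paired with $\eta_i\,\al^{n-2}$ and integrated over $\overline{X}$, yields the linear system $\sum_{j\in J}(D_i\cdot D_j)\,c_j=0$ for all $i$; since $I$ is invertible this forces $c=0$, giving the desired independence and, via Lemma \ref{lem=iotaiso}, the isomorphism.

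The argument is essentially formal once the Gram-matrix viewpoint is in place, so I expect the only genuinely geometric ingredient---and therefore the step to state with care---to be the block decomposition of $I$: namely the vanishing $D_i\cdot D_j=0$ across distinct connected components. This follows from the disjointness of the $D^k$ together with formula \eqref{eq:ddot-bis}, where the restriction $\iota_j^*\eta_i$ vanishes once $D_i\cap D_j=\emptyset$. Everything else reduces to the elementary fact that a family of vectors whose Gram matrix is nonsingular is linearly independent.
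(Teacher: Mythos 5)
Your proposal is correct and follows essentially the same route as the paper: reduce via Lemma \ref{lem=iotaiso} to linear independence of the classes $\eta_j=\iota_{j!}(1)$, then pair a putative relation against the classes $\eta_i\,\al^{n-2}$ and evaluate on $[\overline{X}]$ to see that the coefficient vector lies in the kernel of the (block-diagonal, hence invertible) intersection matrix. The only difference is cosmetic: you phrase the pairing step as a Gram-matrix argument and spell out the block decomposition, which the paper establishes in the paragraph preceding the lemma.
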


\begin{proof}
For each $j\in J$, set $\eta_j:=\iota_{j!} (1)\in H^2(\overline{X},\C)$. 
By Lemma \ref{lem=iotaiso}, we only need to check that the classes 
$\{  \eta_j \}_{j\in J}$ are independent. 

Suppose $\sum_{j\in J} b_j \eta_j =0$. Taking the cup product 
of $\sum_{j\in J} b_j \eta_j$ with the classes $\eta _i \al ^{n-2}$ 
for each $i\in J$ and evaluating on the fundamental class 
$[\overline{X}]$, we see using formula $\eqref{eq:ddot}$ that 
the vector $(b_j)_{j\in J}$ is in the kernel of the intersection matrix of $D$. 
By our hypothesis, this matrix is invertible.  
Hence, $b_j=0$ for all $j\in J$.
\end{proof}

Consider now a $1$-form $\omega \in H^1(\overline{X},\C)= H^1(A)$ and 
the associated  covariant derivative $d_\omega$ from \eqref{eq:dw}, given by 
$d_{\omega}(a)=  a\omega $ for $a \in A^0$ and
\begin{equation}
\label{eq:omega}
d_{\omega}(\eta, (b_j)_{j \in J})= \Big( \sum_{j \in J}\iota_{j!}(b_j)+\omega 
\wedge\eta, (b_j \iota_{j}^*(\omega) ) _{j \in J} ,0\Big)
\end{equation}
for $ (\eta, (b_j)_{j \in J}) \in A^1$. With a stronger hypothesis 
on the divisor $D$, we obtain the following lemma.

\begin{lemma}
\label{lem=final}
Suppose each of the intersection matrices $I_1,\dots, I_m$ is definite. 
If $d_{\omega}(\eta, (b_j)_{j \in J})= 0$, then $b_j=0$ for all $j \in J$.
\end{lemma}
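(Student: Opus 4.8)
The plan is to read off two conditions from the hypothesis $d_\omega(\eta,(b_j)_{j\in J})=0$, using the explicit formula \eqref{eq:omega}. Since the target $A^2$ splits as $H^2(\overline{X})\oplus \bigoplus_j H^1(D_j)\oplus\bigoplus H^0(D_j\cap D_{j'})$, the vanishing of $d_\omega$ is equivalent to the simultaneous vanishing of its three components. The $H^1(D_j)$-component gives $b_j\,\iota_j^*(\omega)=0$ in $H^1(D_j)$ for every $j$, while the $H^2(\overline{X})$-component gives the relation
\begin{equation*}
\sum_{j\in J} b_j\,\eta_j + \omega\wedge\eta = 0
\end{equation*}
in $H^2(\overline{X},\C)$, where $\eta_j=\iota_{j!}(1)$. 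Because each $D_j$ is irreducible, $H^0(D_j)=\C$, so each $b_j$ is a scalar and the first condition says that, for every $j$, either $b_j=0$ or $\iota_j^*(\omega)=0$.

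First I would set $J'=\{\,j\in J : b_j\ne 0\,\}$ and argue by contradiction, assuming $J'\ne\emptyset$. By the above, $\iota_j^*(\omega)=0$ for every $j\in J'$. The key step is to pair the displayed relation in $H^2(\overline{X})$ with the classes $\eta_i\,\al^{n-2}$, for $i\in J'$, and evaluate on $[\overline{X}]$. Using \eqref{eq:ddot}, the contribution of $\sum_j b_j\eta_j$ becomes $\sum_{j\in J'} b_j\,(D_i\cdot D_j)$ (the terms with $b_j=0$ drop out). For the cross-term I would invoke the projection formula together with $\eta_i=\iota_{i!}(1)$ and the adjunction identity $\langle\iota_{i!}(\gamma),[\overline{X}]\rangle=\langle\gamma,[D_i]\rangle$ to rewrite
\begin{equation*}
\langle \omega\wedge\eta\cup\eta_i\,\al^{n-2},[\overline{X}]\rangle
= \langle \iota_i^*(\omega)\cup\iota_i^*(\eta)\cup\iota_i^*(\al^{n-2}),[D_i]\rangle,
\end{equation*}
which vanishes precisely because $\iota_i^*(\omega)=0$ for $i\in J'$. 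Thus, for every $i\in J'$, one obtains $\sum_{j\in J'} b_j\,(D_i\cdot D_j)=0$; that is, the vector $(b_j)_{j\in J'}$ lies in the kernel of the principal submatrix of the intersection matrix indexed by $J'$.

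It then remains to see that this submatrix is invertible. Here I would use that divisors lying in distinct connected components of $D$ are disjoint, so the full intersection matrix is block-diagonal with blocks $I_1,\dots,I_m$; consequently the submatrix indexed by $J'$ is itself block-diagonal, with blocks the principal submatrices of the $I_k$ cut out by $J'\cap J_k$. Since a principal submatrix of a definite matrix is again definite of the same sign, and hence nonsingular, the submatrix indexed by $J'$ is nonsingular. This forces $(b_j)_{j\in J'}=0$, contradicting the definition of $J'$; therefore $J'=\emptyset$ and $b_j=0$ for all $j$.

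I expect the main obstacle to be the cross-term computation: one must correctly apply the projection formula and the adjunction identity to see that the $\omega\wedge\eta$ contribution drops out exactly on the index set $J'$ where $\iota_i^*(\omega)$ vanishes. Everything else — reading off the two conditions and the linear-algebra step about definite principal submatrices — is routine once this vanishing is in place, and it is precisely the interplay between the two components of $d_\omega=0$ that makes the argument close.
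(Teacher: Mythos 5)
Your proof is correct and follows essentially the same route as the paper's: kill the $b_j$ with $\iota_j^*(\omega)\neq 0$ via the $H^1(D_j)$-component, then pair the $H^2(\overline{X})$-component against the remaining divisor classes, using the projection formula to make the $\omega\wedge\eta$ term vanish, and finish with the invertibility of a principal submatrix of the block-definite intersection matrix. The only (immaterial) difference is that you index the surviving block by $\{j: b_j\neq 0\}$ rather than by $\{j: \iota_j^*(\omega)=0\}$ as the paper does.
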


\begin{proof}
Let $D^1,\dots,D^m$ be the connected components of $D$,  
and let $\{D_j\}_{j\in J_k}$ be the set of irreducible components 
of $D^k$. Let $J'_k$ be the set of indices $i\in J_k$ for  
which $\iota_i^*(\omega) = 0$, and set $J_k''=J_k\setminus J_k'$. 
The intersection matrix $I_k$ contains the 
diagonal block $I_k'$.   Since, by assumption, $I_k$ 
is a definite matrix, each of these blocks is an invertible matrix.  

If $i \in J''_k$, then the condition 
$d_{\omega}(\eta, (b_j)_{j \in J})= 0$  implies that $b_i=0$. 
If $i \in J'_k$, let us apply $\iota_i^*$ to the equality 
$\sum_{j \in J}\iota_{j!}(b_j)+\omega \wedge\eta=0$. 
Using formula \eqref{eq:ddot-bis}, we find that the vector 
$(b_j)_{j\in \bigcup_{k=1}^m J'_k}$ belongs to the kernel of
the invertible matrix $I'$ with blocks $I'_1, \dots,I'_m$.
\end{proof}

Observe now that the hypothesis of Theorem \ref{thm2:intro} 
coincides with that of Lemma \ref{lem=final}, and implies  
that of Lemma \ref{lem=iotabis}.  Thus, we may invoke 
these two lemmas in order to finish the proof of  the theorem, 
as shown next.

\begin{proof}
Recall that $X=\overline{X}\setminus D$, and 
$(A^{\hdot}, d)$ is the Gysin model (over $\C$) 
associated to the divisor $D\subset \overline{X}$. 
By Theorem \ref{thm:dp-ccm}, then, there is  
an analytic isomorphism $H^1(X,\C^*) _{(1)} \cong H^1(A)_{(0)}$ 
which identifies $ \VV^1_r (X)_{(1)}$ with $ \RR^1_r (A)_{(0)}$, for all $r\ge 0$.

On the other hand, the inclusion $(H^{\hdot}(\overline{X},\C), d=0) \inj (A^{\hdot}, d)$ 
identifies $H^{1}(\overline{X},\C)$ with $H^1(A)$, by Lemma \ref{lem=iotabis},
and $\RR^1_r (\overline{X})$ with $ \RR^1_r (A)$ for all $r\ge 0$, 
by Lemma \ref{lem=final}, and we are done.
\end{proof}

\begin{corollary}
\label{cor1} 
Suppose that the intersection matrices associated to the divisor 
$D=\overline{X}\setminus X$ are definite.
Then for any fixed projective curve $C_g$ of genus $g>1$, the 
inclusion $\iota\colon  X \to \overline{X}$ induces a bijection between the 
equivalence classes of fibrations $\overline{X}\to C_g$ (i.e., surjective morphisms 
with connected general fiber) and the equivalence classes of fibrations defined on $X$.
\end{corollary}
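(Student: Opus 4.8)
The plan is to realize the asserted correspondence as the restriction map $\overline{f}\mapsto \overline{f}\circ\iota$, and to prove it is a bijection between equivalence classes, with surjectivity supplied by Theorem~\ref{thm2:intro} together with the dictionary between pencils and characteristic varieties. First I would check that this map is well defined: if $\overline{f}\colon\overline{X}\to C_g$ is surjective with connected general fibre, then---since $\dim\overline{X}\ge 2$ and $D$ is a divisor---only finitely many fibres of $\overline{f}$ can be contained in $D$, so the general fibre of $\overline{f}$ meets $X$ in a dense connected open subset; hence $\overline{f}\circ\iota\colon X\to C_g$ is again a fibration, and post-composition with $\Aut(C_g)$ is respected, so the map descends to equivalence classes. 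Injectivity is then immediate from the density of $X$ in $\overline{X}$: two morphisms $\overline{X}\to C_g$ whose restrictions to $X$ agree (up to an automorphism of $C_g$) must agree on all of $\overline{X}$, since morphisms into a separated variety that coincide on a dense open subset coincide.

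The heart of the matter is surjectivity, for which I would invoke the dictionary between pencils and cohomology jump loci. On the projective side, $\overline{X}$ is compact K\"ahler and hence $\infty$-formal by \cite{DGMS}; by the generalized Castelnuovo--de Franchis theory (see \cite{A, GL87, GL91}), the equivalence classes of fibrations $\overline{X}\to C$ onto smooth curves of genus $\ge 2$ are in bijection with the positive-dimensional irreducible components of $\RR^1_1(\overline{X})$, the component attached to a fibration $\overline{g}\colon\overline{X}\to C$ being the linear subspace $\overline{g}^{*}H^1(C,\C)\subseteq H^1(\overline{X},\C)$, of dimension twice the genus of $C$. On the quasi-projective side, Arapura's theorem \cite{A} identifies the non-translated irreducible components of $\VV^1_1(X)$ (those passing through the identity character $1$) with the subtori $f^{*}H^1(C,\C^{*})\subseteq H^1(X,\C^{*})$ arising from fibrations $f\colon X\to C$; moreover $f$ is recovered from this subtorus.

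To connect the two pictures I would use Theorem~\ref{thm2:intro}. Under the definiteness hypothesis, Lemma~\ref{lem=iotabis} guarantees that $\iota^{*}\colon H^1(\overline{X},\C)\to H^1(X,\C)$ is an isomorphism, and Theorem~\ref{thm2:intro} promotes this to an analytic germ isomorphism $\VV^1_r(X)_{(1)}\cong\RR^1_r(\overline{X})_{(0)}$ for all $r$; applying Theorem~\ref{thm:dp-ccm} to the formal model $(H^{\hdot}(\overline{X},\C),0)$ of $\overline{X}$ further identifies $\RR^1_r(\overline{X})_{(0)}\cong\VV^1_r(\overline{X})_{(1)}$, so that $\iota^{*}$ induces a germ isomorphism $\VV^1_r(X)_{(1)}\cong\VV^1_r(\overline{X})_{(1)}$ matching component germs through the base points. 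Given now a fibration $f\colon X\to C_g$, the subtorus $T_f=f^{*}H^1(C_g,\C^{*})$ is a non-translated component of $\VV^1_1(X)$ of dimension $2g$; its germ at $1$ is matched, under the germ isomorphism induced by $\iota^{*}$, with the germ of a component of $\VV^1_1(\overline{X})$ through $1$, hence---by the projective dictionary and the fact that a subtorus is determined by its germ at the identity---with a component $\overline{f}^{*}H^1(C',\C^{*})$ coming from a fibration $\overline{f}\colon\overline{X}\to C'$ of the same dimension $2g$, so that $C'$ has genus $g\ge 2$. Because the germ isomorphism is induced by $\iota^{*}$, one has $\iota^{*}\bigl(\overline{f}^{*}H^1(C',\C^{*})\bigr)=(\overline{f}\circ\iota)^{*}H^1(C',\C^{*})=T_{\overline{f}\circ\iota}$, whence $T_f=T_{\overline{f}\circ\iota}$; by the uniqueness clause of Arapura's theorem this forces $f\sim\overline{f}\circ\iota$, and identifying $C'\cong C_g$ through this equivalence exhibits $f$ as the restriction of a fibration $\overline{f}\colon\overline{X}\to C_g$. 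This gives surjectivity and completes the bijection.

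The step I expect to be most delicate is the passage from the purely local (germ-level) matching furnished by Theorem~\ref{thm2:intro} to the global statement about fibrations. Concretely, one must be sure that the germ isomorphism carries a genuine non-translated component of $\VV^1_1(X)$ to a genuine component of $\VV^1_1(\overline{X})$ of the same dimension, and that no spurious non-translated components (for instance those coming from genus-one or orbifold pencils) intervene to spoil the correspondence. Here the definiteness hypothesis is essential: it is exactly what makes $\iota^{*}$ an isomorphism on $H^1$ (Lemma~\ref{lem=iotabis}) and, through Theorem~\ref{thm2:intro}, forces the germs---and hence, by the two dictionaries, the fibration data themselves, including the genus---to correspond perfectly on the two sides.
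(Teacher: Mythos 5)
Your proposal is correct and follows essentially the same route as the paper: Arapura's dictionary between fibrations onto curves of genus $\ge 2$ and the positive-dimensional components of $\VV^1_1$ through the origin (on both the quasi-projective and the projective side), matched up via the germ isomorphism of Theorem~\ref{thm2:intro} together with the identification $\VV^1_1(\overline{X})_{(1)}\cong\RR^1_1(\overline{X})_{(0)}$ from \cite{DPS}. The paper's proof is a one-sentence citation of exactly these ingredients; you have merely supplied the (correct) details of well-definedness, injectivity, and the component-matching that the authors leave implicit.
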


\begin{proof}
In view of the relationship between fibrations $X\to C_g$ 
and irreducible components of $\VV^1_1(X)_{(1)}$, 
and likewise for $\overline{X}$, established by Arapura in \cite{A},
the desired conclusion follows from Theorem \ref{thm2:intro}, in 
conjunction with with the identification 
$\VV^1_1 (\overline{X})_{(1)} \cong \RR^1_1 (\overline{X})_{(0)}$
from \cite{DPS}.
\end{proof}

\section{Examples and discussion}
\label{sect:ex}

In this last section, we make a few additional remarks and we 
examine several classes of examples related to Theorem \ref{thm2:intro}.

\begin{rem}
\label{rk1} 
There is an alternate way to prove Corollary \ref{cor1}, one which is both 
more direct, and also covers the cases $g=0$ and $g=1$.  
If $\dim \overline{X}=2$,  Zariski's 
Lemma (see Lemma (8.2) from \cite[p.~90]{BPV}), when  
coupled with our assumption on $D$  prevents any of the 
connected components $D^k$ of $D$ to 
be a fiber of a fibration $\overline{X}\to C_g$.
The general case can be reduced to the surface 
case by taking a general linear section.
\end{rem}

\begin{rem}
\label{rk2}  
It may happen that the map $\iota^*\colon H^1(\overline{X},\C) \to H^1(X,\C)$ is an 
isomorphism, yet the conclusion of  Theorem \ref{thm2:intro} does not hold.

For instance, take $\overline{X}=C_1\times C_1$ to be the product  
of two elliptic curves, and $D$ to be the diagonal in the product.  
In view of Lemma \ref{lem=iotaiso} (see also \cite[p.~64]{GH}), 
the map $\iota^*$ is an isomorphism. On the other hand, 
$\VV^1_1(X)$ is $2$-dimensional at $1$, by \cite[Example 10.2]{DPS}, 
yet $\RR^1_1(\overline{X})= \{ 0 \}$, by an easy computation.
\end{rem} 

\begin{exm} 
\label{ex:surf} 
Let $S$ be a normal, projective, connected complex  surface. 
Then the singular locus of $S$ is a finite set, say $\{ a_1,\ldots ,a_m \}$. 
Take $X$ to be the regular locus, $S_{\reg}=S \setminus \{a_1,\dots ,a_m\}$. 
By resolving each of these singularities, we obtain a surface $\overline{X}$ as 
in Theorem \ref{thm2:intro}: indeed, the corresponding matrices $I_k$ are 
all negative definite in this case, in view of the Mumford--Grauert criterion 
(see Theorem 2.1 from \cite[p.~72]{BPV}). The case $S=S_{\reg}$ 
was treated in \cite{DPS}.
\end{exm}

\begin{exm} 
\label{ex:whom} 
Fix a set of strictly positive weights $w=(w_1,w_2,w_3)$ and let 
$f\in \C[x,y,z]$ be a polynomial such that the affine surface $Y$ 
given by the equation $f=0$ 
in $\C^3$ has only isolated singularities, and the top degree part $f_d$ of 
$f$ with respect to the weights $w$ defines an isolated singularity at the 
origin of $\C^3$. 

Consider the closure $Z$ of $Y$ in the weighted projective 
space $\PP(w_1,w_2,w_3,1)$.
Then $Z=Y \cup D$, where $D$ is a smooth curve such that $D\cdot D>0$.
It follows as above that the surface $X=Y_{\reg}$, the smooth part of $Y$, 
satisfies the assumption of our Theorem \ref{thm2:intro}. The case of a weighted 
homogeneous polynomial (i.e., when $f=f_d$) was considered in \cite{DPSquasi}, 
where it was shown that such surfaces $X$ may not be $1$-formal.
\end{exm}

\begin{exm}
\label{ex:heis2}
Let $M$ be the $3$-dimensional Heisenberg nilmanifold discussed in 
Example \ref{ex:heis}, as well as in  \cite[Example 6.17]{DPS} and 
\cite[Example 8.6]{DPSquasi}. Since $M$ is an $S^1$-bundle with 
Euler number $1$, it has the same homotopy type as a $\C^*$-bundle 
$X$ over an elliptic curve $C=C_1$, associated to the line bundle
$L=\OO_C (s)$. We can construct a compactification $\overline{X}$ by taking 
the projective bundle $\PP(E)$ associated to the rank two vector bundle 
$E=L \oplus \OO_C$. Indeed, if $D_0$ and $D_{\infty}$ denote the 
divisors $\PP(0 \oplus \OO_C) \subset \PP(E)$ and $\PP(L \oplus 0) \subset \PP(E)$, 
then clearly $X= \overline{X} \setminus (D_0 \cup D_{\infty})$.

It is readily seen that $D_0^2=1$ and $D_{\infty}^2=-1$, which shows 
that we can apply Theorem \ref{thm2:intro}. Using the description 
of the cohomology of a projective bundle, it follows that 
$H^1(C,\C)=H^1(\overline{X},\C)$ and $H^2(C,\C)$ is a 
subspace of $H^2(\overline{X},\C)$, whence 
$\RR^1_1 (\overline{X})_{(0)}= \{ 0 \}$.  Theorem \ref{thm2:intro} 
now shows that there are no positive-dimensional components of 
$\VV^1_1(X)$ passing through the origin.

This behavior was predicted in Example \ref{ex:heis}, where it was 
pointed out that $\RR^1_1(A)=\{0\}$, for an explicit model $A$ of $M$. 
The Gysin model $A^{\hdot}(\overline{X},D)$ for $X$ can 
also be computed explicitly, but it is slightly more complicated than 
the model $A^{\hdot}$. For instance, the Hilbert series of  $A^{\hdot}$ is $(1+t)^3$,
whereas the Hilbert series of  $A^{\hdot}(\overline{X},D)$ is $(1+t)^4$. 
\end{exm}

\noindent\textbf{Acknowledgement}\textit{.} 
The research of A.D. was partially supported by Institut Universitaire de France.

The research of S.P. was partially supported by the Romanian Ministry of 
National Education, CNCS-UEFISCDI, grant PNII-ID-PCE-2012-4-0156.

The research of A.S. was partially supported by National Security 
Agency grant H98230-13-1-0225.

\smallskip\noindent 
Received: 

\salt 

\adresa{A. Dimca, Univ. Nice Sophia Antipolis, CNRS,  LJAD, UMR 7351,\\
 06100 Nice, France.\\
E-mail:  {\tt dimca\ap unice.fr}
}

\adresa{S. Papadima, Simion Stoilow Institute of Mathematics, \\
P.O. Box 1-764, RO-014700 Bucharest, Romania\\
E-mail:  {\tt Stefan.Papadima\ap imar.ro}
}

\adresa{A. Suciu, Department of Mathematics,\\
Northeastern University, Boston, MA 02115, USA\\
E-mail:  {\tt a.suciu\ap neu.edu}
}

\end{document}